\newtheorem{lemma}{Lemma}[section]
\newtheorem{theorem}{Theorem}[section]
\newtheorem{remark}{Remark}[section]
\newcommand{\N}{\ifmmode{{\Bbb N}}\else{\mbox{${\Bbb N}$}}\fi}
\newcommand{\R}{\ifmmode{{\Bbb R}}\else{\mbox{${\Bbb R}$}}\fi}
\begin{document}
\title{Absence of Exponential Stability and Polynomial Stabilization in a Class of Beam Models with Tip Rotary Inertia}

\author{Gerardo G\'{o}mez \'{A}valos$^{1,}$\footnote{gegova@gmail.com, gerardo.gomez@unab.cl},\ \ Jaime Mu\~noz Rivera$^{2,}$\footnote{jemunozrivera@gmail.com}, \ \  Elena Ochoa Ochoa$^{3,}$ \footnote{elenaochoaochoa18@gmail.com}.  \\  \small \it $^{1}$ Universidad Andres Bello, Departamento de Matem\'aticas, Facultad de Ciencias Exactas,\\
	\small \it Sede Concepci\'on, Autopista Concepci\'on-Talcahuano 7100, Talcahuano, Chile.\\
	\small \it $^{2,3}$Universidad del B\'io B\'io, Departamento de Matem\'aticas, Facultad de Ciencias,\\
	\small \it Avenida Collao 1202, Concepci\'on, Chile.\\
	\small \it $^{2}$ Laborat\'orio Nacional de Computaci\'on Cient\'ifica, Petr\'opolis - Brazil.				    
}	
\date{}
		\maketitle
	\begin{abstract}

We investigate the impact of dissipative dynamic boundary conditions applied at one end of a beam, analyzing their influence on model stability within the Euler-Bernoulli framework. Our primary finding is that hybrid dissipation does not alter the decay characteristics of the original model. We examine two scenarios: first, when hybrid dissipation is the sole dissipative mechanism, and second, when it complements other dissipative mechanisms. In the first case, we demonstrate that hybrid dissipation fails to induce exponential decay, instead producing a slow decay rate of $t^{-1/2}$ for large $t$. In the second case, when acting as a complementary mechanism, hybrid dissipation neither enhances nor diminishes the decay behavior of the original model.

\end{abstract}

	\noindent{\it Keywords and phrases}: Euler Bernoulli equation, Semigroup theory, Exponential stability, Polynomial stability, Lack of the exponential stability.

		\section{Introduction}

We investigate the asymptotic behavior of the hybrid Euler-Bernoulli model with a tip mass. The model consists of the Euler-Bernoulli equation coupled with a dynamic boundary condition. Before addressing our main problem, we provide a brief discussion of the model. The equation of motion is given by
\begin{align}
\rho u_{tt} + M_{xx} &= 0, \quad (x,t) \in (0,\ell) \times \mathbb{R}^{+}_{0}, \label{b1}
\end{align}
where $M = \alpha u_{xx}$ represents the bending moment, $\alpha$ is a positive constant, and $u$ denotes the transverse displacement. The following figure illustrates the problem setup.

\begin{figure}[H]
	\setlength{\unitlength}{2pt}
	\begin{center}
		\begin{tikzpicture}[xscale=1,yscale=1]
			\draw[<->,color=gray!50,very thick] (-1,0)--(8.5,0);
			\draw[<->,color=gray!50,very thick] (0.8,-0.5)--(0.8,3.5);
			\draw[fill=blue, opacity=0.5, rounded corners=5mm] (-0.8,1.2) rectangle (0.8,2.8);
			\draw[color=black, very thick, rounded corners=5mm] (-0.8,1.2) rectangle (0.8,2.8);
			\draw[fill=yellow, opacity=0.5] (0.8,1.7) rectangle (7.5,2.3);
			\draw[-] (0.8,2)--(7.5,2);
			\draw[-,dashed] (7.5,2)--(7.5,0) node[below] {$\ell$};
			\draw[-,very thick] (0,2) node {$\bullet$};
			\draw[fill=gray, pattern=north east lines] (7.5,1) rectangle (7.8,3);
			\draw[-] (-1.6,2) node {\scriptsize \it tip body };
			\draw[-] (-1.6,1.5) node {\scriptsize \it of mass $m_0$};
			\draw[-] (0.6,0) node[below] {$0$};
			\draw[-,dashed, thick] (0,2)--(0.8,2);
			\draw[-] (0.4,2) node[below] {$d$};
			\draw[-] (0.1,2.65) node[below] {$O'$};
			\draw[-] (1,2.65) node[below] {$O$};
			\draw[-,very thick] (.8,2) node {$\bullet$};

		\end{tikzpicture}
	\end{center}
	\caption{\it Beam with Tip Body}\label{f1}
\end{figure}
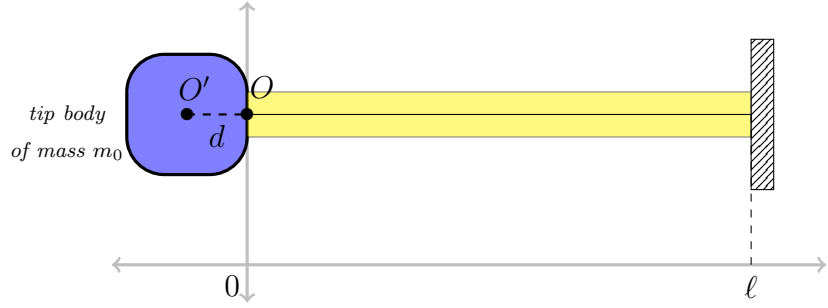

\noindent
We assume that the container is rigidly
attached to the end $x=0$, and that the container and its
contents have mass $m$ and a center of mass $O'$ located a distance 
$d$ from the end of the beam $O$ . We assume that the
damping effect of the internal granular material can be
represented by damping coefficients $\gamma$ and $\gamma^{*}$ whose 
precise contributions are described below.

The force balance at the tip body, located at $x=0$ (see Figure~\ref{f1}), requires special attention. It is given by
\begin{equation}\label{force_balance}
m u_{tt}(0,t) + m d u_{ttx}(0,t) + \gamma u_t(0,t) = -M_x,
\end{equation}
where the vertical component of the inertial term is $m (u(0,t) + d \sin(\beta))$, and $\beta$ represents the angle between the segment $\overline{OO'}$ and the $x$-axis. For small $\beta$, we apply the approximation 
$\sin(\beta)\! \approx \! \tan(\beta)\!  \approx \! u_x(0,t)$ hence $m (u(0,t) + d \sin(\beta))\approx m (u(0,t) + d u_x(0,t) $. The term $\gamma u_t(0,t)$ accounts for damping due to the granular material, with $\gamma$ as the internal damping coefficient. For further details, see~\cite{Sillor1, Sillor2,1739862}.

Next, we consider the moment balance about an axis passing through the center of mass of the tip body, perpendicular to the plane of motion:
\begin{equation}\label{moment_balance}
J u_{xtt}(0,t) = -M + d M_x - d \gamma^* u_{xt}(0,t),
\end{equation}
where $J$ denotes the moment of inertia of the combined mass of the container and granular material about its center of mass, and $\gamma^*$ represents the damping coefficient associated with the angular motion of the granular material.

Combining these, we derive the following system of equations:
\begin{align}
m u_{tt}(0,t) + m d u_{ttx}(0,t) + \gamma u_t(0,t) + \alpha u_{xxx}(0,t) &= 0, \label{Hib1} \\
m d u_{tt}(0,t) + (J + m d^2) u_{ttx}(0,t) + d \gamma u_t(0,t) + d \gamma^* u_{xt}(0,t) - \alpha u_{xx}(0,t) &= 0, \label{Hib2}
\end{align}
where Equation~\eqref{Hib1} follows from Equation~\eqref{force_balance} by substituting $M = \alpha u_{xx}$, and Equation~\eqref{Hib2} is obtained by inserting $M_x$ from Equation~\eqref{force_balance} into Equation~\eqref{moment_balance}.

At the opposite end, $x = \ell$, the boundary conditions are:
\begin{equation}\label{boundary_l}
u(\ell,t) = u_x(\ell,t) = 0.
\end{equation}

Additionally, we prescribe the following initial conditions:
\begin{equation}\label{initial_conditions}
u(x,0) = u_0(x), \quad u_t(x,0) = u_1(x), \quad u_t(0,0) = w_0, \quad u_{xt}(0,0) = w_1,
\end{equation}
for all $x \in (0,\ell)$ and $t > 0$.

The Euler-Bernoulli beam model with an attached hollow-tip body has been studied in \cite{Sillor1, Sillor2}. In \cite{Sillor1}, the authors established the well-posedness and exponential decay of solutions for viscoelastic materials. In \cite{Sillor2}, the authors analyzed the thermomechanical behavior of a viscoelastic beam with an attached tip body, modeling a coupled energy-elastic system for the beam alongside an equation for the temperature of the tip body. They proved the existence and uniqueness of a weak solution and presented numerical simulations of the model.

The absence of exponential stability in the hybrid Euler-Bernoulli model was first addressed in \cite{Litt}, where the authors demonstrated that the system lacks exponential stability. In \cite{Rao}, the author investigated a clamped beam with a mass attached at one end under high-derivative boundary damping. The study showed that uniform stabilization is achieved when standard boundary feedbacks are applied at the end without the mass, but not when applied at the end with the attached mass. This analysis utilized a compact perturbation method to determine conditions for uniform stabilization. For control problems related to the hybrid model, see \cite{Littman-Markus, Raocontrol}. In \cite{1739862}, the authors considered the Euler-Bernoulli model with a tip mass, neglecting the vertical component of the inertial term and incorporating a dissipative boundary condition, under which they proved exponential stability. See also \cite{Conrad}. In \cite{Guo}, B.Z. Guo investigated the boundary control of a hybrid system comprising an Euler-Bernoulli beam with variable coefficients and a linked rigid body under high-derivative boundary damping. The study focused on stability, exact controllability, and exact observability using a Riesz basis approach.

The main contribution of this paper is to demonstrate that the dissipative mechanism introduced by the hybrid model is insufficient to produce exponential stability. We consider the conservative Euler-Bernoulli model with a tip mass, incorporating the effect of rotary inertia. Under these conditions, we prove that the system described by Equations \eqref{b1}, \eqref{Hib1}--\eqref{initial_conditions} is not exponentially stable. Furthermore, we show that the solution decays to zero at a rate of $t^{-1/2}$ as $t \to \infty$ for all initial data in the domain of the operator $\mathcal{A}$, improving the result in \cite{Litt}. Motivated by the applications of viscoelastic and thermoviscoelastic beam models with a tip mass, as studied in \cite{Sillor1, Sillor2}, we compare the asymptotic behavior of the hybrid model (with tip mass) and the corresponding non-hybrid model (without tip mass). Our primary result establishes that the model with a tip mass is exponentially stable if and only if the model without a tip mass is exponentially stable. This indicates that the dissipation induced by the attached mass does not influence the exponential stability of the model.

The remainder of this paper is organized as follows. In Section~\ref{sec:well_posedness}, we establish the well-posedness of the model defined by Equations \eqref{b1}, \eqref{Hib1}--\eqref{initial_conditions} and demonstrate its lack of exponential stability. In Section~\ref{sec:polynomial_stability}, we prove the polynomial stability of the associated semigroup. In Section~\ref{sec:comparison}, we compare the asymptotic behavior of models with and without an attached tip mass, showing the equivalence of exponential stability between their respective semigroups. Finally, in Section~\ref{sec:applications}, we present applications of the results obtained in Section~\ref{sec:comparison}.

\section{The semigroup approach }\label{sec:well_posedness}
\setcounter{equation}{0}
Let us introduce  the energy associated to the above model  
$$
E(t)=\frac 1 2 \int_0^\ell\left( |u_{t}|^2+\alpha |u_{xx}|^2\right)\,dx +\frac{m}{2}|u_{t}(0,t)|^2+md(u_{xt}(0,t)u_{t}(0,t))+	\frac{J+md^2}{2}|u_{xt}(0,t)|^2
$$
It is no difficult to see  that $E$ verifies 
\begin{equation}\label{energy}
			\frac{d}{dt}E(t) =-\gamma|u_t(0,t)|^2-d\gamma^*|u_{xt}(0,t)|^2-d\gamma u_t(0,t)u_{xt}(0,t).
\end{equation}
			Assuming that 
			\begin{equation}\label{hipo1}
d\gamma\leq 2\gamma^*,
\end{equation}
there exists a positive constant $\alpha_0$ such that  
			$$
			\frac{d}{dt}E(t) \leq -\alpha_0(|u_t(0,t)|^2+|u_{xt}(0,t)|^2).
			$$
			Hence integrating over $[0,t]$ we have that 
\begin{equation}\label{dissT}
			E(t) + \int_0^t\alpha_0(|u_t(0,t)|^2+|u_{xt}(0,t)|^2)d\tau \leq E(0).
\end{equation}
To reduce to first order system \eqref{b1}, \eqref{Hib1}-\eqref{Hib2} let us introduce the vector
$$
U=(u,u_t,u_{t}(0,t),u_{xt}(0,t))^\top=(u,v,w,z)^\top
$$
To simplify the model, let us introduce the vector  $V=(u_{t}(0,t),u_{xt}(0,t))^\top=(w(t),z(t))^\top$, hence  the boundary condition \eqref{Hib1}-\eqref{Hib2} can be written as 
\begin{equation}\label{zHib1}
BV_t+KV=\Gamma, 
\end{equation}
where
\begin{equation}\label{MatrC}
B=\begin{pmatrix}
m&md\\
md&J+md^2
\end{pmatrix},\quad 
K=\begin{pmatrix}
\gamma&0\\
d\gamma &d\gamma^* 
\end{pmatrix}, \quad \Gamma=\alpha\begin{pmatrix}
- u_{xxx}(0,t)\\
 u_{xx}(0,t)
\end{pmatrix}.
\end{equation}
Note that the Matrix $B$ and $K$ are positive definite. 	
Let us introduce the space $\mathbf{H}_0$ defined as 
\begin{equation}\label{PhaseT}
\mathbf{H}_0=\mathcal{V}\times L^2(0,\ell)
\end{equation}
	where 
\begin{equation}\label{BcL}
\mathcal{V}=\left\{g\in H^2(0,\ell); \;\; g(\ell)=g_x(\ell)=0\right\}
\end{equation}
Let  $\mathcal{H}$ be the phase space  
\begin{equation}\label{Phase1}
\mathcal{H}=\mathbf{H}_0\times \mathbb{C}^2.
\end{equation}
	that together  with the norm
\begin{equation}\label{norm}
	\|U\|_{\mathcal{H}}^2= \int_0^\ell\left( |v|^2+\alpha |u_{xx}|^2\right)\,dx +BV\cdot V.
\end{equation}
is a Hilbert space. 
Let us define the operator $\mathcal{A}$ as

	\begin{align}\label{Adef1}
				U=\begin{pmatrix}
					u\\
					v\\
					V\
					\end{pmatrix}, \quad \quad \mathcal{A}U=\underbrace{\begin{pmatrix}
					v\\
					-\alpha u_{xxxx} \\
								B^{-1}\Gamma
					\end{pmatrix}}_{\mathcal{A}_0 U}- \underbrace{\begin{pmatrix}
					0\\
					0 \\
					B^{-1}KV
					\end{pmatrix}}_{B_0 U}.
				\end{align}
				With domain 
				$$
				D(\mathcal{A})=\left\{U\in \mathcal{H};\;\; u\in H^4(0,\ell),\; v\in \mathcal{V},\; w=v(0),\quad z=v_x(0)
				\right\}, 
				$$
and $D(\mathcal{A}_0)=D(\mathcal{A})$. Under the above notations, system \eqref{b1}, \eqref{Hib1}-\eqref{Hib2} can be written as 
	\begin{align}\label{problem}
		U_t-\mathcal{A}U=0,\quad U(0)=U_0\in \mathcal{H}.
	\end{align}
	The norm defined in \eqref{norm} induces an inner product  for which the operator \(\mathcal{A}\) and \(\mathcal{A}_0\) are dissipative, that is 
\begin{equation}\label{dissi}
\text{Re} \, (\mathcal{A}U, U)_{\mathcal{H}} \leq -\alpha_0 \left( |w|^2 + |z|^2 \right),\quad \text{Re} \, (\mathcal{A}_0U, U)_{\mathcal{H}} = 0
\end{equation}
	
\noindent 
	The resolvent system is expressed as 
\begin{equation}\label{resolvent}
i\lambda U-\mathcal{A}U=F\quad \in\mathcal{H}
\end{equation}
 which, in terms of its components, is given by
\begin{align}
	i\lambda u-v=f_1, \label{r1}\\
	i\lambda v+\alpha u_{xxxx}=f_2,\label{r2}\\
	i\lambda BV+KV-\Gamma=Bf_4, \label{r6}
\end{align}
where $V=(w,z)^\top$. For $\mathcal{A}_0$ the resolvent system is the same for $\gamma=\gamma^*=0$, which implies that $K=0$. 
Equation \eqref{r6} in terms of the components  is written as 
\begin{eqnarray}
i\lambda mw+ i\lambda md z+\gamma w+\alpha u_{xxx}(0)&=&0\label{zwHib1}\\
i\lambda mdw+i\lambda ({J}+md^2) z+d\gamma w +d\gamma^*z-\alpha u_{xx}(0)&=&0\label{zwHib2}
\end{eqnarray}
Taking inner product  \eqref{resolvent} by $U$ and taking the real  part and using \eqref{dissi} arrive to 
\begin{equation}\label{dissipative}
\gamma|w|^2+\gamma_2|z|^2\leq c\|F\|_\mathcal{H}\|U\|_\mathcal{H}.
\end{equation}

\noindent	
Under the above conditions we have 
	\begin{theorem}\label{semigroup}
	The operator $\mathcal{A}$ and $\mathcal{A}_0$ generate a contractive semigroup. In particular we have that 
	if $U_0\in \mathcal{H}$ then solution of \eqref{problem} satisfies 
	$U\in C([0,T];\mathcal{H})$. Moreover if $U_0\in D(\mathcal{A})=D(\mathcal{A}_0)$ then the corresponding solution  verifies 
$$
	U\in C([0,T];D(\mathcal{A}))\cap C^1([0,T];\mathcal{H})
	$$
\end{theorem}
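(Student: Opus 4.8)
The natural approach is to invoke the Lumer–Phillips theorem. Since the excerpt already records in \eqref{dissi} that both $\mathcal{A}$ and $\mathcal{A}_0$ are dissipative with respect to the inner product induced by the norm \eqref{norm}, it remains to show that $0 \in \rho(\mathcal{A})$ (equivalently, that $\mathrm{Range}(I - \mathcal{A}) = \mathcal{H}$, or more simply that $\mathcal{A}$ is surjective, so that $\lambda I - \mathcal{A}$ is onto for some $\lambda > 0$). I would work with the range condition: given $F = (f_1, f_2, f_3, f_4)^\top \in \mathcal{H}$, solve $\lambda U - \mathcal{A}U = F$ for some fixed $\lambda > 0$ (taking $\lambda = 1$ is convenient, or even $\lambda = 0$ if one checks invertibility directly). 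Writing out the components as in \eqref{r1}–\eqref{r6} with $i\lambda$ replaced by $\lambda$, one eliminates $v = \lambda u - f_1$ and is left with a fourth-order elliptic boundary value problem for $u$ on $(0,\ell)$: $\alpha u_{xxxx} + \lambda^2 u = f_2 + \lambda f_1$, together with the clamped conditions $u(\ell) = u_x(\ell) = 0$ at the right end and two boundary conditions at $x = 0$ obtained from \eqref{zwHib1}–\eqref{zwHib2} (with the $i\lambda$'s replaced) that couple $u(0), u_x(0), u_{xx}(0), u_{xxx}(0)$. Once $u$ is found in $H^4(0,\ell)$, the components $w, z$ are recovered from $V$ via the algebraic relation coming from \eqref{r6}, and one must check the compatibility $w = v(0)$, $z = v_x(0)$, i.e. that the solution lands in $D(\mathcal{A})$.

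The cleanest way to carry out the solvability of this BVP is variationally: multiply the equation by a test function $\varphi \in \mathcal{V}$, integrate by parts twice, and absorb the boundary terms at $x=0$ using the two tip conditions so that they produce a bounded, coercive sesquilinear form on $\mathcal{V}$ (the positive definiteness of $B$ and $K$ recorded after \eqref{MatrC} is exactly what is needed to control the boundary contributions). Coercivity of $a(u,\varphi) = \alpha\int_0^\ell u_{xx}\overline{\varphi_{xx}}\,dx + \lambda^2 \int_0^\ell u\overline{\varphi}\,dx + (\text{boundary terms})$ on $\mathcal{V}$ follows because $\|u_{xx}\|_{L^2}$ is a norm on $\mathcal{V}$ equivalent to the $H^2$ norm (Poincaré-type inequality using the clamped conditions at $\ell$), and for $\lambda > 0$ the added terms only help. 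Lax–Milgram then yields a unique weak solution $u \in \mathcal{V}$; elliptic regularity (the equation reads $u_{xxxx} = \alpha^{-1}(f_2 + \lambda f_1 - \lambda^2 u) \in L^2$) upgrades it to $H^4(0,\ell)$, and integrating by parts backwards shows the natural boundary conditions at $x=0$ hold, i.e. $U \in D(\mathcal{A})$. Hence $\lambda I - \mathcal{A}$ is bijective, $(\lambda I - \mathcal{A})^{-1}$ is bounded by the open mapping theorem, and Lumer–Phillips gives that $\mathcal{A}$ generates a $C_0$-semigroup of contractions; the same argument with $\gamma = \gamma^* = 0$ handles $\mathcal{A}_0$. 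The regularity statements for the solution of \eqref{problem} are then the standard semigroup facts: $U_0 \in \mathcal{H}$ gives a mild solution $U \in C([0,T];\mathcal{H})$, and $U_0 \in D(\mathcal{A})$ gives a classical solution $U \in C([0,T];D(\mathcal{A})) \cap C^1([0,T];\mathcal{H})$.

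The main obstacle I anticipate is bookkeeping the boundary terms at $x=0$ correctly: after two integrations by parts the form $\alpha\int_0^\ell u_{xxxx}\overline{\varphi}\,dx$ produces terms $-\alpha u_{xxx}(0)\overline{\varphi(0)} + \alpha u_{xx}(0)\overline{\varphi_x(0)}$ (the $\ell$-end terms vanish since $\varphi \in \mathcal{V}$), and these must be rewritten, using the tip equations \eqref{zwHib1}–\eqref{zwHib2} (resolvent version), in terms of $u(0), u_x(0)$ and the data $f_4$, so that the resulting expression is a sesquilinear form in $(u,\varphi)$ plus a bounded antilinear functional of $\varphi$. Getting the signs and the matrix $B$ to appear with the right orientation so that the boundary part of $a(\cdot,\cdot)$ is (at least) nonnegative — which is where $B \succ 0$ and the damping structure enter — is the delicate computation; everything else is routine. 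A clean alternative that sidesteps some of this is to prove $0 \in \rho(\mathcal{A})$ by showing $\mathcal{A}$ is injective with closed range and then that $\mathcal{A}^*$ is injective, but the direct Lax–Milgram route is more self-contained and is what I would write up.
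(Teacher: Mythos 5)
Your proposal is correct and follows essentially the same route as the paper: dissipativity of $\mathcal{A}$ (and $\mathcal{A}_0$) in the inner product of \eqref{norm} together with solvability of the resolvent equation, then Lumer--Phillips and the standard regularity facts from semigroup theory (Pazy). In fact the paper's proof is a one-line appeal to dissipativity plus $0\in\varrho(\mathcal{A})$, so your Lax--Milgram verification of the range condition simply supplies the details the paper leaves to the reader.
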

\begin{proof}
	Since the operator $\mathcal{A}$ is dissipative and $0\in \varrho(\mathcal{A})$, standard procedures (see \cite{Pazy}) shows that $\mathcal{A}$ is the infinitesimal generator of a contractions semigroup. 
\end{proof}

\begin{remark}\label{Rem1}

From Theorem \ref{semigroup}, we conclude that the evolution equation given by \eqref{b1}, \eqref{Hib1}--\eqref{Hib2}admits a mild solution satisfying \eqref{dissT} for any initial data in the phase space \(\mathcal{H}\). The proof relies on density arguments, beginning with initial data in the domain \(D(\mathcal{A})\) and leveraging the density of \(D(\mathcal{A})\) in \(\mathcal{H}\).

Moreover for $\mathcal{A}_0$, that is   $\gamma=\gamma^*=0$  system \eqref{b1}, \eqref{Hib1}--\eqref{Hib2} is conservative and relation \eqref{energy} implies 
$$
E(t)=E(0)
$$
which means that the semigroup $e^{\mathcal{A}_0 t}$ does not decay. That is the type and the essential type of the semigroup $e^{\mathcal{A}_0 t}$ are zero. 
\end{remark}

\bigskip\noindent
Let us denote by 
$$
\mathcal{J}(x,t)= \left| u_{t}(x,t)\right| ^{2}  +\alpha |u_{xx}(x,t)|^2 .
$$
\begin{lemma}\label{lemT} Under hypothesis \eqref{hipo1} we get that 
	the solution of System  \eqref{b1}, \eqref{Hib1}-\eqref{initial_conditions}  verifies 
	\begin{eqnarray}\label{Obs111}
		\left| \ell \int_{0}^{t}\mathcal{J}(0,\tau)d\tau -\int_{0}^{t}\int_{0}^{\ell}\mathcal{J}(x,\tau)+\left| u_{xx}(x,\tau)\right|^{2} dxd\tau \right| &\leq& cTE(0).
	\end{eqnarray}
	
\end{lemma}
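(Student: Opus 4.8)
The plan is to deduce \eqref{Obs111} from a multiplier identity for the equation $u_{tt}+\alpha u_{xxxx}=0$ (that is, \eqref{b1} with $M=\alpha u_{xx}$). I would argue for strong solutions, $U_{0}\in D(\mathcal{A})$ --- the only case in which $\mathcal{J}(0,\cdot)$ is meaningful, and the one used in the sequel --- so that $u\in C([0,T];H^{4}(0,\ell))$ and $u_{t}(0,\cdot),u_{xt}(0,\cdot)\in C^{1}([0,T])$, and then pass to general data by density as in Remark~\ref{Rem1}. Multiplying the equation by $(\ell-x)u_{x}$, integrating over $(0,\ell)\times(0,t)$, and integrating by parts in $x$ and in $t$ while discarding every boundary term at $x=\ell$ (which vanishes because $u(\ell,\tau)=u_{x}(\ell,\tau)=0$), one obtains an identity of the form
\[
\tfrac{\ell}{2}\int_{0}^{t}\mathcal{J}(0,\tau)\,d\tau=\tfrac{1}{2}\int_{0}^{t}\!\!\int_{0}^{\ell}|u_{t}|^{2}\,dx\,d\tau+\tfrac{3\alpha}{2}\int_{0}^{t}\!\!\int_{0}^{\ell}|u_{xx}|^{2}\,dx\,d\tau+\mathcal{R},
\]
where $\mathcal{R}=\alpha\int_{0}^{t}\big(\ell\,u_{xxx}(0,\tau)+u_{xx}(0,\tau)\big)u_{x}(0,\tau)\,d\tau-\big[\int_{0}^{\ell}u_{t}(\ell-x)u_{x}\,dx\big]_{\tau=0}^{\tau=t}$ gathers the interface contributions at $x=0$ and the time-endpoint term (the precise interior coefficients play no role, for the reason given next).

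Two reductions then finish the argument. First, since hypothesis \eqref{hipo1} holds, \eqref{dissT} shows $E$ is non-increasing, so $\int_{0}^{t}\!\int_{0}^{\ell}|u_{t}|^{2}\,dx\,d\tau$ and $\int_{0}^{t}\!\int_{0}^{\ell}|u_{xx}|^{2}\,dx\,d\tau$ are each bounded by $c\int_{0}^{t}E(\tau)\,d\tau\le cTE(0)$; hence the interior integral above may be rewritten as $\tfrac{1}{2}\int_{0}^{t}\!\int_{0}^{\ell}(\mathcal{J}+|u_{xx}|^{2})\,dx\,d\tau$ up to an error $\tfrac{2\alpha-1}{2}\int_{0}^{t}\!\int_{0}^{\ell}|u_{xx}|^{2}\,dx\,d\tau$, which is itself $\le cTE(0)$. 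Second, the time-endpoint term is controlled by $cE(0)$: by Cauchy--Schwarz, the Poincar\'e-type inequality $\int_{0}^{\ell}|u_{x}(\cdot,\tau)|^{2}\,dx\le\ell^{2}\int_{0}^{\ell}|u_{xx}(\cdot,\tau)|^{2}\,dx$ (valid because $u_{x}(\ell,\tau)=0$) and $\int_{0}^{\ell}(|u_{t}|^{2}+\alpha|u_{xx}|^{2})\,dx\le 2E(\tau)\le 2E(0)$ (because $B$ is positive definite), one gets $\big|\int_{0}^{\ell}u_{t}(\ell-x)u_{x}\,dx\big|\le cE(\tau)\le cE(0)$ for every $\tau$. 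Consequently \eqref{Obs111} is reduced to the bound $\big|\alpha\int_{0}^{t}(\ell u_{xxx}(0)+u_{xx}(0))u_{x}(0)\,d\tau\big|\le cTE(0)$.

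This is the heart of the matter, since $u_{xxx}(0,\cdot)$ and $u_{xx}(0,\cdot)$ are not controlled by the energy. I would substitute, from the tip-body equations \eqref{Hib1}--\eqref{Hib2},
\[
\alpha u_{xxx}(0)=-m u_{tt}(0)-md\,u_{ttx}(0)-\gamma u_{t}(0),\qquad \alpha u_{xx}(0)=md\,u_{tt}(0)+(J+md^{2})u_{ttx}(0)+d\gamma u_{t}(0)+d\gamma^{*}u_{xt}(0),
\]
thereby splitting the interface integral into pieces. Each piece carrying $u_{tt}(0)$ or $u_{ttx}(0)$ is integrated by parts in time using $\partial_{\tau}u_{x}(0,\tau)=u_{xt}(0,\tau)$: this produces time-endpoint terms $[u_{t}(0)u_{x}(0)]_{0}^{t}$ and $[u_{xt}(0)u_{x}(0)]_{0}^{t}$, each $\le cE(0)$ as above, together with $\int_{0}^{t}u_{t}(0)u_{xt}(0)\,d\tau$ and $\int_{0}^{t}|u_{xt}(0)|^{2}\,d\tau$, both $\le cE(0)$ by the dissipation relation \eqref{dissT}. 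The remaining, lower-order pieces $\int_{0}^{t}u_{t}(0)u_{x}(0)\,d\tau$ and $\int_{0}^{t}u_{xt}(0)u_{x}(0)\,d\tau$ are estimated by Cauchy--Schwarz: using $\int_{0}^{t}(|u_{t}(0)|^{2}+|u_{xt}(0)|^{2})\,d\tau\le cE(0)$ from \eqref{dissT} and the pointwise trace bound $|u_{x}(0,\tau)|^{2}\le cE(\tau)\le cE(0)$ (whence $\int_{0}^{t}|u_{x}(0,\tau)|^{2}\,d\tau\le cTE(0)$), each such piece is $\le cT^{1/2}E(0)\le cTE(0)$ for $T\ge 1$. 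Summing the pieces gives $|\mathcal{R}|\le cTE(0)$, which with the first reduction yields \eqref{Obs111}.

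The step I expect to be the main obstacle is exactly this last one: one has to recognize that the uncontrollable traces $u_{xxx}(0,\cdot),u_{xx}(0,\cdot)$ must be fed back into \eqref{Hib1}--\eqref{Hib2}, and that an integration by parts in $t$ then converts the second time derivatives of the boundary unknowns into quantities absorbed by the dissipation estimate \eqref{dissT}; the rest is routine bookkeeping with traces and the energy.
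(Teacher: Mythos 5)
Your proposal is correct and follows essentially the paper's own route: the same multiplier $q(x)=x-\ell$ (up to sign) applied to \eqref{b1}, the same substitution of the traces $\alpha u_{xxx}(0),\alpha u_{xx}(0)$ from \eqref{Hib1}--\eqref{Hib2} followed by integration by parts in time so that the resulting terms are absorbed by the endpoint energy bounds and the dissipation estimate \eqref{dissT}. The only differences are cosmetic (integrating in $t$ before the boundary manipulation, Cauchy--Schwarz in time instead of the paper's $\epsilon$-Young step for the cross term, and the harmless $c(1+T)E(0)$ versus $cTE(0)$ bookkeeping), so no changes are needed.
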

\begin{proof}
	Multiplying \eqref{r2} by $q\overline{u_x}$ with $q(x)=x-\ell$ and integrating we have
	\begin{align}
 \int_{0}^{\ell} u_{tt}q  {u_x}\ dx+\int_{0}^{\ell} \alpha u_{xxxx} q{u_x}\ dx&=0.  \label{tt1}
	\end{align}
	Using the resolvent equation \eqref{r1}, we have 
\begin{eqnarray*}
 \int_{0}^{\ell} u_{tt}q   {u_x}\ dx &=&\frac{d}{dt}\int_{0}^{\ell} u_tq  {u_x}\ dx-\int_{0}^{\ell} u_tq  {u_{xt}}\ dx\\
 &=&\frac{d}{dt}\int_{0}^{\ell} u_tq  {u_x}\ dx-\frac 12 \int_{0}^{\ell} q\frac{d}{dx}|u_{t}|^2\ dx
\end{eqnarray*}
Since $u_x(\ell)=0$ we get 
\begin{eqnarray*}
	\int_{0}^{\ell} u_{xxxx} q\overline{u_x}\ dx&=&-\dfrac{\ell}{2} u_x(0)u_{xxx}(0)+u_{xx}(0)u_x(0)+\int_{0}^{\ell}|u_{xx}|^2dx-\int_{0}^{\ell}u_{xxx}q \overline{u_{xx}}\!dx\\
&=&-\dfrac{\ell}{2} u_x(0)u_{xxx}(0)+u_{xx}(0)u_x(0)+\int_{0}^{\ell}|u_{xx}|^2dx-\frac 12 \int_{0}^{\ell}q \frac{d}{dx}|u_{xx}|^2 dx
\end{eqnarray*}
Substitution of the above equations into \eqref{tt1} and recalling the definition of $\mathcal{J}$ we get

	\begin{align}
		-\dfrac{1}{2}\int_{0}^{\ell} q\dfrac{d}{dx}\mathcal{J}(x)  dx +\alpha \int_{0}^{\ell}|u_{xx}|^2dx &=\frac{d}{dt}\int_{0}^{\ell} u_tq  {u_x}\ dx
		\nonumber\\
		&-\dfrac{\ell}{2}\alpha u_x(0)u_{xxx}(0)+\alpha u_{xx}(0)u_x(0)\label{Fineq}
	\end{align}
Using   \eqref{Hib1}--\eqref{Hib2}  
and recalling  $(u_{t}(0,t),u_{xt}(0,t))=(w(t),z(t))$ 
\begin{align}
 -u_x(0)u_{xxx}(0)&= u_{x}(0,t)\left(mw_{t}+md z_{t}+\gamma w\right)\nonumber\\
&= \frac{d}{dt}[u_{x}(0,t)\left(mw+md z\right)]-u_{xt}(0,t)\left(mw+md z\right)+\gamma u_{x}(0,t)w,\nonumber\\
 &=\frac{d}{dt}\underbrace{[u_{x}(0,t)\left(mw+md z\right)]}_{:=X_1}-mzw- m d \left| z\right|^2 +\gamma u_{x}(0,t)w.\label{Obb1}
	\end{align}
Using the identity for $u_{xx}(0,t)$ given by \eqref{Hib2}
\begin{eqnarray}
u_x(0)u_{xx}(0) &=&u_x(0,t)\left(mdw_{t} +({J}+md^2) z_{t}+d\gamma w+d\gamma^*z\right)\nonumber\\
&=&	\frac{d}{dt}[u_{x}(0,t)\left(mdw+({J}+md^2) z\right)]-mdu_{xt}(0,t)w-({J}+md^2) |z|^2\nonumber\\
&&+d\gamma u_x(0,t)w+d\gamma^*u_x(0,t)u_{xt}(0,t) \nonumber\\
&=&	\frac{d}{dt}\underbrace{[u_{x}(0,t)\left(mdw+({J}+md^2) z\right)]}_{:=X_2}-mdwz-({J}+md^2) |z|^2\nonumber\\
&&+d\gamma u_x(0,t)w+\frac{d\gamma^*}{2}\frac{d}{dt}\underbrace{|u_x(0,t)|^2}_{:=X_3} ,\label{Obb2}
	\end{eqnarray}	
Denoting by $X(t)=\alpha X_1+\alpha X_2+\alpha X_3$, that is  
$$
X(t)=\frac{\ell \alpha}{2} u_{x}(0,t)\left(mw+md z\right) +\alpha u_{x}(0,t)\left(mdw+({J}+md^2) z\right)+\alpha\frac{ d\gamma^*}{2} |u_x(0,t)|^2 .
$$
Summing up identities \eqref{Obb1} and \eqref{Obb2} yields 
\begin{equation}\label{above} 
-\dfrac{\ell}{2}\alpha u_x(0)u_{xxx}(0)+\alpha u_{xx}(0)u_x(0)=\frac{d}{dt}X(t)+\mathfrak{R}, 
\end{equation}
	where 
	$$
	 \mathfrak{R}=-m|z|^2- 2md wz-({J}+md^2) |z|^2+d\gamma u_x(0,t)w-\alpha \gamma z u.
	 $$
Substitution of identity \eqref{above} into \eqref{Fineq} we get 
	\begin{align}
 -\int_{0}^{\ell} q\dfrac{d}{dx}\mathcal{J}(x)dx+\int_{0}^{\ell}\left| u_{xx}\right| ^{2} dx&= \frac{d}{dt}\int_{0}^{\ell} u_tq  {u_x}\ dx+\frac{d}{dt}X(t)+\mathfrak{R}.\label{casi}
	\end{align}
	 	Performing and integration by parts in $[0,\ell]$ and  integrating with respect to $t$ over $[0,t]$ 
we get 

		\begin{eqnarray}
&& \ell \int_{0}^{t}\mathcal{J}(0,\tau)d\tau -\int_{0}^{t}\!\!\int_{0}^{\ell}\mathcal{J}(x,\tau)+\left| u_{xx}(x,\tau)\right|^{2} dxd\tau =\mathfrak{X}(t)-\mathfrak{X}(0)+\int_0^t \mathfrak{R}ds\label{TTcasi}
	\end{eqnarray}
where 
$$
\mathfrak{X}(t)= \int_{0}^{\ell} u_tq  {u_x}\ dx+X(t)
$$	
Using Sobolev's and  Poincare's inequality and \eqref{dissT} we get 	 
\begin{align}\label{eqq1}
	|u_x(0,t)|^2\leq c\int_{0}^{\ell}| u_{xx}| ^{2} dx,\quad |u(0,t)|^2\leq c\int_{0}^{\ell}| u_{x}| ^{2} dx 
\end{align}
	So we arrive to 
\begin{equation}\label{eqq2}
	d\gamma u_x(0,t)w\leq \frac{\epsilon}{c}	|u_x(0,t)|^2+c_\epsilon|w|^2
\end{equation}
Hence we have 
$$
\mathfrak{X}(t)\leq c_0E(0),\quad \left|\int_0^t \mathfrak{R}\;ds\right|\leq c_1\int_0^t  |z|^2+ |w|^2\;dt+\epsilon\int_{0}^{\ell}| u_{xx}| ^{2} dx
$$	
Inserting the above inequalities into \eqref{TTcasi} yields
	\begin{eqnarray*}
&&\left| \ell \int_{0}^{t}\mathcal{J}(0,\tau)d\tau -\int_{0}^{t}\!\!\int_{0}^{\ell}\mathcal{J}(x,\tau)+\left| u_{xx}(x,\tau)\right|^{2} dxd\tau \right|\\
&&\qquad\leq c_0E(0)+c_1\int_0^t  |z|^2+ |w|^2\;dt+\epsilon\int_{0}^{\ell}| u_{xx}| ^{2} dx.
	\end{eqnarray*}	
Using \eqref{dissT} our conclusion follows.

\end{proof}

\bigskip
\noindent
The next Lemma will play an important role in that follows

 \newcommand{\zz}{\quad \rightarrow\quad}
\begin{theorem}\label{Kim}
Let  $\Omega $ be an open bounded set of $\mathbb{R}^n$ and $\mathcal{O}$ be the set
$$
\mathcal{O}=\left\{w\in C(0,T;H^\beta(\Omega));\; w_t\in C(0,T;H^\alpha(\Omega))
\right\}.
$$
Let us suppose that $0\leq \alpha \leq r<\beta$ then any closed bounded set $B\subset \mathcal{O}$ is compact in $C(0,T;H^r(\Omega))$.  
\end{theorem}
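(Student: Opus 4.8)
\emph{Proof proposal.} The statement is a version of the Aubin--Lions--Simon compactness lemma, and I would prove it directly with the Arzel\`a--Ascoli theorem in the Banach space $C(0,T;H^r(\Omega))$. Recall that $B\subset C(0,T;X)$ is relatively compact if and only if $B$ is equicontinuous and, for each $t\in[0,T]$, the section $B(t)=\{w(t):w\in B\}$ is relatively compact in $X$. I will verify both properties for $X=H^r(\Omega)$, using that $B$ is bounded in $\mathcal{O}$, say
$$
\sup_{w\in B}\;\sup_{t\in[0,T]}\Big(\|w(t)\|_{H^\beta(\Omega)}+\|w_t(t)\|_{H^\alpha(\Omega)}\Big)\le M<\infty .
$$

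\emph{Relative compactness of the sections.} Since $r<\beta$ and $\Omega$ is bounded, the embedding $H^\beta(\Omega)\hookrightarrow H^r(\Omega)$ is compact (Rellich--Kondrachov; this is classical for the interval $\Omega=(0,\ell)$ relevant to our problem, and holds for general bounded $\Omega$ under the usual regularity of $\partial\Omega$). As $B(t)$ is bounded by $M$ in $H^\beta(\Omega)$, it is relatively compact in $H^r(\Omega)$ for every fixed $t$.

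\emph{Equicontinuity.} For $0\le s\le t\le T$ the bound on the time derivative gives
$$
\|w(t)-w(s)\|_{H^\alpha(\Omega)}\le\int_s^t\|w_t(\tau)\|_{H^\alpha(\Omega)}\,d\tau\le M\,|t-s|,
$$
so $B$ is uniformly Lipschitz, hence equicontinuous, in $H^\alpha(\Omega)$. To transfer this to the target space I would invoke Ehrling's lemma: from the chain $H^\beta(\Omega)\hookrightarrow\hookrightarrow H^r(\Omega)\hookrightarrow H^\alpha(\Omega)$ (the first embedding compact because $r<\beta$, the second continuous because $\alpha\le r$), for every $\varepsilon>0$ there is $C_\varepsilon>0$ with
$$
\|v\|_{H^r(\Omega)}\le\varepsilon\,\|v\|_{H^\beta(\Omega)}+C_\varepsilon\,\|v\|_{H^\alpha(\Omega)}\qquad\text{for all }v\in H^\beta(\Omega).
$$
Applying this to $v=w(t)-w(s)$ and using $\|w(t)-w(s)\|_{H^\beta}\le 2M$ together with the Lipschitz estimate yields
$$
\|w(t)-w(s)\|_{H^r(\Omega)}\le 2\varepsilon M+C_\varepsilon M\,|t-s|.
$$
Given $\eta>0$, first fix $\varepsilon$ so that $2\varepsilon M<\eta/2$, then choose $\delta=\eta/(2C_\varepsilon M)$; for $|t-s|<\delta$ the right-hand side is $<\eta$, uniformly in $w\in B$. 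This is equicontinuity in $C(0,T;H^r(\Omega))$. By Arzel\`a--Ascoli, $B$ is relatively compact in $C(0,T;H^r(\Omega))$, and since $B$ is closed it is compact, which proves the theorem.

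\emph{Main obstacle.} The one nonroutine point is the passage from the derivative control, which lives only in the weakest space $H^\alpha(\Omega)$, to equicontinuity in the stronger space $H^r(\Omega)$: this is exactly where Ehrling's lemma --- equivalently, the compactness of $H^\beta(\Omega)\hookrightarrow H^r(\Omega)$ --- does the work, interpolating the temporal regularity against the spatial boundedness in $H^\beta(\Omega)$. Accordingly, the whole argument rests on having the compact Sobolev embedding $H^\beta(\Omega)\hookrightarrow\hookrightarrow H^r(\Omega)$ at one's disposal for the bounded set $\Omega$, which for the beam domain $(0,\ell)$ is standard.
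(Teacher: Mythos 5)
Your proposal is correct and follows essentially the same route as the paper: Arzel\`a--Ascoli in $C(0,T;H^r(\Omega))$, with pointwise relative compactness of the sections coming from the compact embedding $H^\beta(\Omega)\hookrightarrow H^r(\Omega)$ and equicontinuity obtained by playing the $H^\alpha$-Lipschitz bound $\|w(t)-w(s)\|_{H^\alpha}\le M|t-s|$ against the uniform $H^\beta$ bound. The only (cosmetic) difference is that you interpolate additively via Ehrling's lemma, whereas the paper uses the multiplicative interpolation inequality $\|v\|_{H^r}\le C\|v\|_{H^\alpha}^{\theta}\|v\|_{H^\beta}^{1-\theta}$ with $r=\theta\alpha+(1-\theta)\beta$, which in addition yields a uniform H\"older modulus of continuity; the two devices are equivalent for the purpose at hand, and your write-up is if anything the more carefully quantified of the two.
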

\begin{proof} Here we use Arselá Ascoli Theorem. 
Since $B$ is bounded by hypotheses, we only need to prove the equicontinuity of the elements in $B$. Indeed Let us take $u\in B$ and 
using interpolation for  $r=\theta\alpha+(1-\theta)\beta$ we have, 
\begin{eqnarray*}
\|u(t_2)-u(t_1)\|_{H^r(\Omega)}&\leq&
C_\theta \|u(t_2)-u(t_1)\|_{H^\alpha(\Omega)}^\theta
\|u(t_2)-u(t_1)\|_{H^\beta(\Omega)}^{1-\theta}
\end{eqnarray*}
Since $B$ is bounded exists a positive constant $c$ such that 
$$
\|u(t_2)-u(t_1)\|_{H^\beta(\Omega)}\leq c_\beta
$$
using 
$$
u(t_2)-u(t_1)=\int_{t_1}^{t_2}u_{t}(s)\;ds
$$
So we have that
\begin{eqnarray*}
\|u(t_2)-u(t_1)\|_{H^r(\Omega)}&\leq&
C_\theta \|u_{\nu}(t_2)-u_{\nu}(t_1)\|_{H^\alpha(\Omega)}^\theta
\\
&\leq&
C_\theta \{\int_{t_1}^{t_2}
\|u_{t}(t)\|_{H^\alpha(\Omega)}\;dt\}^\theta\\
&\leq&C_\theta\|u_t\|_{C(0,T;H^\alpha(\Omega))}^{\theta}|t_1-t_2|^{\frac{\theta}{2}}
\end{eqnarray*}
Finally, note that for any $t\in [0,T]$ and any $u\in B$ we have that $u(t)$ lies in a compact set of $H^r(\Omega)$ for $r<\beta$.  Hence using the Arsela-Ascoli Theorem our conclusion follows. 
\end{proof}

\bigskip
\noindent
Under the above conditions we have 
\begin{theorem}\label{nonExp}
System   \eqref{b1}, \eqref{Hib1}-\eqref{initial_conditions}  is not exponentially stable. 
\end{theorem}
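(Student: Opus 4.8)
The plan is to disprove exponential stability by exhibiting a sequence in $D(\mathcal A)$ that violates the Gearhart--Huang--Prüss resolvent criterion: either $i\mathbb R\not\subset\varrho(\mathcal A)$, or there is a sequence $\lambda_n\in\mathbb R$ with $|\lambda_n|\to\infty$ and $U_n\in D(\mathcal A)$ with $\|U_n\|_{\mathcal H}\equiv1$ such that $\|(i\lambda_n-\mathcal A)U_n\|_{\mathcal H}\to0$. Since Theorem~\ref{semigroup} gives a contraction semigroup with $0\in\varrho(\mathcal A)$, the spectrum is confined and the second alternative is the natural route. Concretely, I would look for approximate eigenfunctions concentrated away from the tip body: the dissipation in \eqref{dissi} only controls $|w|^2+|z|^2=|u_t(0)|^2+|u_{xt}(0)|^2$, so a high-frequency standing-wave solution of the Euler--Bernoulli equation $\lambda^2 u=\alpha u_{xxxx}$ that essentially vanishes together with its first derivative at $x=0$ will make the dissipative output negligible while keeping the energy norm of order one. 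The boundary conditions at $x=\ell$ are $u(\ell)=u_x(\ell)=0$, so I would build $u_n(x)=\phi(\lambda_n^{1/2}(\ell-x))$ type profiles from the four exponentials $e^{\pm\mu_n x},e^{\pm i\mu_n x}$ with $\mu_n^4=\lambda_n^2/\alpha$, choosing the $\lambda_n$ along the asymptotic spectrum of the clamped beam so that the hinged/clamped conditions are met asymptotically and the tip contributions are exponentially small.

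The key steps, in order: (1) set $U_n=(u_n,i\lambda_n u_n, i\lambda_n u_n(0), i\lambda_n u_{nx}(0))^\top$ so that \eqref{r1} holds exactly and the work reduces to making \eqref{r2} small in $L^2(0,\ell)$ and \eqref{r6} small in $\mathbb C^2$; (2) pick $u_n$ solving $-\lambda_n^2 u_n+\alpha u_{nxxxx}=0$ exactly on $(0,\ell)$ so the $L^2$ residual in \eqref{r2} is identically zero, reducing everything to the four scalar boundary conditions; (3) use the two clamped conditions at $x=\ell$ and impose that $u_n(0)$ and $u_{nx}(0)$ are exponentially small in $\mu_n$ (achievable by taking the characteristic root $\mu_n$ of $\cos\mu_n\ell\cosh\mu_n\ell=\pm1$-type dispersion relation, i.e. $\mu_n\ell\approx (n+\tfrac12)\pi$, so the solution is dominated by $\cos\mu_n(\ell-x)$ near $x=\ell$ and decays like $e^{-\mu_n x}$-corrections near $x=0$); (4) normalize so $\|U_n\|_{\mathcal H}=1$, which forces $\int_0^\ell(|v_n|^2+\alpha|u_{nxx}|^2)\sim1$ and is consistent since $BV_n\cdot V_n = O(\lambda_n^2 e^{-2\mu_n\ell})\to0$; (5) check that the $\mathbb C^2$-residual of \eqref{r6}, which involves $i\lambda_n BV_n+KV_n-\Gamma_n$ with $\Gamma_n=\alpha(-u_{nxxx}(0),u_{nxx}(0))^\top$, tends to zero — here $V_n$ and the third derivative $u_{nxxx}(0)$ are exponentially small while $u_{nxx}(0)$ may be only polynomially small, so the dispersion relation must be tuned (one extra order of cancellation) to kill the surviving term; (6) conclude via Gearhart--Huang--Prüss. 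One may also invoke Theorem~\ref{Kim} to argue compactness of relevant embeddings if a contradiction-via-compact-perturbation argument (à la \cite{Rao}) is preferred, but the direct construction is cleaner.

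The main obstacle is step~(5): the bending-moment entry $\alpha u_{nxx}(0)$ of $\Gamma_n$ need not be exponentially small for a generic standing wave, so naively $\|(i\lambda_n-\mathcal A)U_n\|_{\mathcal H}$ would be $O(1)$ from that component. The fix is to choose $u_n$ not as a single cosine but as the exact solution of the full clamped-end problem at $x=\ell$ together with the two homogeneous conditions ``$u_n(0)=0$'' and ``$u_{nxx}(0)=0$'' (a well-posed linear $4\times4$ system whose determinant is $\approx \sinh\mu_n\ell\,(\text{bounded})\ne0$), leaving only $u_{nx}(0)$ and $u_{nxxx}(0)$ — both of which are then $O(e^{-\mu_n\ell})$ because the relevant hyperbolic modes are evaluated at the far end. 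With that choice every residual is exponentially small while the energy stays of order one, and the criterion fails. I would double-check the degenerate possibility that the $4\times4$ matrix becomes singular along the chosen subsequence (it does not, by the asymptotics of $\sinh$), and that the normalization constant stays bounded below (it does, since the $\cos\mu_n(\ell-x)$ part carries $O(1)$ energy).
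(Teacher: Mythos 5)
Your overall strategy (Gearhart--Huang--Pr\"uss via approximate eigenvectors) is legitimate and genuinely different from the paper, but the construction you give in step~(5)/the ``fix'' does not work. First, an internal inconsistency: the four homogeneous conditions $u_n(\ell)=u_{nx}(\ell)=0$, $u_n(0)=u_{nxx}(0)=0$ imposed on the four-dimensional solution space of $\alpha u''''=\lambda_n^2u$ with a \emph{nonsingular} $4\times4$ matrix force $u_n\equiv0$; to obtain a nontrivial mode you must choose $\lambda_n$ so that the determinant vanishes, i.e.\ at the pinned--clamped eigenvalues ($\tan\mu_n\ell=\tanh\mu_n\ell$). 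Second, and more seriously, even for those eigenfunctions the claim that $u_{nx}(0)$ and $u_{nxxx}(0)$ are $O(e^{-\mu_n\ell})$ is false: the eigenfunction is $a_n\sin(\mu_nx)$ plus an exponentially small $\sinh$ correction, so it is an oscillation spread over the whole beam, not concentrated away from the tip, and $u_{nx}(0)\sim a_n\mu_n$, $u_{nxxx}(0)\sim a_n\mu_n^3$. After normalizing the energy ($a_n\mu_n^2\sim1$), the residual of \eqref{r6} contains the term $-\lambda_n^2\bigl(m\,u_n(0)+md\,u_{nx}(0)\bigr)$, which is of size $\mu_n^3\to\infty$; the residual blows up instead of vanishing. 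Third, there is a structural obstruction to your design choice $u_{nxx}(0)=0$: by dissipativity \eqref{dissipative} any sequence with $\|U_n\|_{\mathcal H}=1$ and residual tending to zero has $v_n(0)\to0$, and then the paper's own observability estimate \eqref{Obs131} (Lemma \ref{lem1}) forces $|u_{nxx}(0)|$ to stay bounded \emph{away} from zero. So a genuine Gearhart sequence must carry an $O(1)$ bending moment at the tip, with the smallness of the residual coming from a delicate balance between $\lambda_n^2BV_n$ and $\Gamma_n$; producing it amounts to the full asymptotic analysis of the characteristic determinant showing that a branch of eigenvalues of $\mathcal A$ approaches the imaginary axis, which your proposal does not carry out.

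For comparison, the paper avoids spectral asymptotics altogether: it decomposes $\mathcal A=\mathcal A_0-B_0$ with $\mathcal A_0$ conservative, writes $e^{\mathcal At}U_0-e^{\mathcal A_0t}U_0=-\int_0^te^{\mathcal A_0(t-s)}B_0U\,ds$, and shows $B_0$ acts compactly along trajectories by proving trace regularity of $u_t(0,t)$, $u_{xt}(0,t)$ (using \eqref{Hib1}--\eqref{Hib2}, the energy identity \eqref{dissT}, Lemma \ref{lemT} and the interpolation--compactness result of Theorem \ref{Kim}). Hence $e^{\mathcal At}-e^{\mathcal A_0t}$ is compact, the essential growth bound of $e^{\mathcal At}$ equals that of the conservative semigroup, namely $0$, and exponential stability fails. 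If you want to keep your route, you would need either the precise eigenvalue asymptotics mentioned above or a different choice of boundary behavior at $x=0$ consistent with Lemma \ref{lem1}; as written, the key smallness claims fail.
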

\begin{proof}
	To demonstrate the lack of exponential stability of the semigroup \( e^{\mathcal{A}t} \), we show that the difference \( e^{\mathcal{A}t} - e^{\mathcal{A}_0 t} \) is a compact operator, which, by Remark \ref{Rem1}, implies that the growth bound of \( e^{\mathcal{A}t} \) is zero. To this end, we utilize the decomposition provided in \eqref{Adef1}, which allows us to express the solution of the evolution problem \eqref{problem} as \( U_t - \mathcal{A}_0 U = -B_0 U \), that is

		\begin{align}\label{nose}
		U=e^{\mathcal{A}_0 t}U_0-\int_{0}^{t}e^{\mathcal{A}_0(t-s)} B_0U \ ds
	\end{align}
Since $U$ is the solution of \eqref{problem} we have $	U=e^{\mathcal{A} t}U_0$, hence identity \eqref{nose} can be written as 
		\begin{align*}
e^{\mathcal{A} t}U_0-e^{\mathcal{A}_0 t}U_0=-\int_{0}^{t}e^{\mathcal{A}_0(t-s)} B_0U \ ds.
	\end{align*}
	To show the lack of exponential estability we only need to proof that the operator $B_0$ is compact over $\mathcal{H} $. Let us denote by $\Phi(t)=mdu_{t}(0,t)+(J+md^2) u_{tx}(0,t)$. Using equation \eqref{Hib2} we have that 
	$$
	\Phi_t=-d\gamma u_t(0,t)-d\gamma^*u_{xt}(0,t)+\alpha u_{xx}(0,t)
	$$
	Using  \eqref{dissT}  and Lemma \ref{lemT} we conclude that $\Phi$ belong to a bounded set in $ H^1(0,T)$ for any initial data $U_0\in B_R(0)\subset \mathcal{H}$, where $B_R(0)=\{w\in\mathcal{H};\;\|w\|_{\mathcal{H}}\leq R\}$. 
Hence 
\begin{equation}\label{compc1}
mdu_{t}(0,t)+(J+md^2) u_{tx}(0,t)=F_1 \quad\mbox{is bounded }\quad H^1(0,T),
\end{equation}
for any initial data $U_0\in B_R(0)\subset  \mathcal{H}$. On the other hand, integrating \eqref{problem} with respect to $t$ we get 
$$
U(t)-U(0)-\mathcal{A}\int_0^tU(s)\;ds=0. 
$$
Since $0\in\varrho(\mathcal{A})$  we conclude that for $U_0 \in \mathcal{H}$ there exists $\Theta\in D(\mathcal{A})$ such that $\mathcal{A}\Theta=U_0$. Denoting by  $\mathfrak{U}(t)=\int_0^tU(s)\;ds+\Theta$, 
  we have that $\mathfrak{U}$ satisfies 
\begin{align*}
		\mathfrak{U}_t-\mathcal{A}\mathfrak{U}=0,\quad \mathfrak{U}(0)=\Theta\in D(\mathcal{A}).
	\end{align*}
Therefore from Theorem \ref{semigroup} we conclude that 
	$$
\mathfrak{U}\in C([0,T];D(\mathcal{A}))\cap C^1([0,T];\mathcal{H}).
	$$
	In particular $\widehat{u}=\int_0^tu\; dt\in C(0,T;H^4(0,\ell))$ and $\widehat{u}_t= u \in C(0,T;H^2(0,\ell))$. Note that  
	
	$$
	H^4(0,\ell)\subset H^{4-\delta}(0,\ell)\subset H^{2}(0,\ell),
	$$
	 for $\frac 12\leq \delta<1$, Using Lemma \ref{Kim} 
	$$\mathcal{O}=\left\{f\in C(0,T;H^4(0,\ell)),\;\;\frac{df}{dt}\in C(0,T;H^2(0,\ell))\right\}
	$$
has compact embeeding in $C(0,T;H^{4-\delta}(0,\ell))\subset C(0,T;C^{3}(0,\ell))$. Since $\widehat{u}\in \mathcal{O}$ for any initial data $U_0\in B_R(0)\subset  \mathcal{H}$, we have that $\widehat{u}_{xxx}$
belong to a compact set of $C(0,T;H^{\delta}(0,\ell))$, for  $\frac 12 \leq \delta <1$. In particular 
$\widehat{u}_{xxx}$ belongs to a compact set in $C(0,T;C(0,\ell))$.
Integrating \eqref{Hib1} we get 
\begin{equation}\label{compc}
mu_{t}(0,t)+md u_{tx}(0,t)=-\gamma u (0,t)-\alpha \int_0^tu_{xxx}(0,\tau)d\tau. 
\end{equation}
The right hand side of \eqref{compc} is compact in $L^2(0,T)$. Therefore 
\begin{equation}\label{compcy}
mu_{t}(0,t)+md u_{tx}(0,t) =F_2 \quad\mbox{is compact in }\quad L^2(0,T).
\end{equation}
Solving \eqref{compc1} and \eqref{compcy}  we conclude that $u_{t}(0,t)$ and $u_{tx}(0,t)$ lies in a compact set $L^2(0,T)$ for any initial data $U_0\in B_R(0)\subset  \mathcal{H}$. 
Hence the operator $B_0$ is compact. The proof is now complete. 
\end {proof}
	
\section{Polynomial decay}	\label{sec:polynomial_stability}
\setcounter{equation}{0}
Here we show that the solution of system    \eqref{b1}, \eqref{Hib1}-\eqref{initial_conditions}  decays polynomially to zero. 
Let us denote by 
$$
\mathcal{J}(x)= \left| v(x)\right| ^{2}  +\alpha |u_{xx}(x)|^2 .
$$
Under the above notations we have 
\begin{lemma}\label{lem1}
	The solution of the resolvent system \eqref{r1}-\eqref{r6} verifies 
	\begin{eqnarray}\label{Obs131}
		\left|\frac\ell 2\mathcal{J}(0) -\int_{0}^{\ell}\mathcal{J}(x)+\left| u_{xx}\right|^{2} dx\right| &\leq& c\left\| U\right\|_\mathcal{H} \left\| F\right\|_\mathcal{H} .
	\end{eqnarray}
	
\end{lemma}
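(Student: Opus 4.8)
The plan is to mimic the proof of Lemma~\ref{lemT}, but now working directly with the resolvent (elliptic) system rather than the evolution system, so that all the time-integrated terms disappear and the right-hand side becomes the product $\|U\|_\mathcal{H}\|F\|_\mathcal{H}$. First I would multiply the resolvent equation \eqref{r2} by $q\overline{u_x}$ with $q(x)=x-\ell$ and integrate over $(0,\ell)$. The term $\int_0^\ell i\lambda v\, q\overline{u_x}\,dx$ is handled by using \eqref{r1} to replace $i\lambda u = v + f_1$, hence $i\lambda u_x = v_x + f_{1,x}$; then $\int_0^\ell i\lambda v\, q\overline{u_x}\,dx = \int_0^\ell v\, q\,\overline{i\lambda u_x}\,dx = \int_0^\ell v\, q\,(\overline{v_x}+\overline{f_{1,x}})\,dx$, and $\int_0^\ell v q \overline{v_x}\,dx = \tfrac12\int_0^\ell q\,\frac{d}{dx}|v|^2\,dx = -\tfrac12|v(0)|^2\ell - \tfrac12\int_0^\ell |v|^2\,dx$ after integration by parts using $v(\ell)=0$. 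The leftover $\int_0^\ell v q \overline{f_{1,x}}\,dx$ is bounded by $c\|U\|_\mathcal{H}\|F\|_\mathcal{H}$ (after integrating by parts if one prefers to avoid $f_{1,x}$, moving the derivative onto $vq$, which costs only $\|v\|,\|v_x\|\lesssim\|U\|_\mathcal{H}$ and $\|f_1\|\lesssim\|F\|_\mathcal{H}$; note $f_1\in\mathcal{V}\subset H^2$). Similarly the fourth-order term gives, exactly as in Lemma~\ref{lemT}, $\alpha\int_0^\ell u_{xxxx}q\overline{u_x}\,dx = -\tfrac\ell2\alpha u_x(0)\overline{u_{xxx}(0)} + \alpha u_{xx}(0)\overline{u_x(0)} + \alpha\int_0^\ell |u_{xx}|^2\,dx - \tfrac\alpha2\int_0^\ell q\frac{d}{dx}|u_{xx}|^2\,dx$, and the last integral becomes $\tfrac\alpha2\ell|u_{xx}(0)|^2 + \tfrac\alpha2\int_0^\ell|u_{xx}|^2\,dx$ — wait, more precisely one keeps it grouped with $\mathcal{J}$ exactly as done in \eqref{Fineq}.

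Collecting, I obtain the analogue of \eqref{Fineq}:
\begin{equation*}
-\frac12\int_0^\ell q\frac{d}{dx}\mathcal{J}(x)\,dx + \alpha\int_0^\ell|u_{xx}|^2\,dx = -\frac\ell2\alpha u_x(0)\overline{u_{xxx}(0)} + \alpha u_{xx}(0)\overline{u_x(0)} + \mathcal{R}_0,
\end{equation*}
where $|\mathcal{R}_0|\le c\|U\|_\mathcal{H}\|F\|_\mathcal{H}$. Integrating the left side by parts in $x$ converts $-\tfrac12\int_0^\ell q\frac{d}{dx}\mathcal{J}\,dx$ into $\tfrac\ell2\mathcal{J}(0) - \tfrac12\int_0^\ell\mathcal{J}\,dx$, reproducing the boundary-observability structure. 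The next step is to control the boundary terms $u_x(0)\overline{u_{xxx}(0)}$ and $u_{xx}(0)\overline{u_x(0)}$ using the resolvent form of the dynamic boundary conditions \eqref{zwHib1}--\eqref{zwHib2}. From \eqref{zwHib1}, $\alpha u_{xxx}(0) = -i\lambda m w - i\lambda md z - \gamma w$; from \eqref{zwHib2}, $\alpha u_{xx}(0) = i\lambda mdw + i\lambda(J+md^2)z + d\gamma w + d\gamma^* z$. Multiplying by $\overline{u_x(0)}$ and using $u_x(0) = z = v_x(0)$ (since $U\in D(\mathcal{A})$, or by density), the troublesome factors $i\lambda z$ pair against $\overline{z}$ to give terms like $i\lambda |z|^2$ — and here is the key cancellation: summing $-\tfrac\ell2 u_x(0)\overline{u_{xxx}(0)}$ and $u_{xx}(0)\overline{u_x(0)}$ and collecting the $i\lambda$-terms, the combination must be real plus something controlled, OR one simply notes that each such term has the form $\lambda z\cdot\overline{z}$ which, although containing $\lambda$, is multiplied by the dissipation-controlled quantity $|z|^2$; by \eqref{dissipative}, $|z|^2\le c\|F\|_\mathcal{H}\|U\|_\mathcal{H}$, so even with the factor $\lambda$ these terms are... not obviously bounded.

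This is the main obstacle. The resolution — the same one used implicitly in the evolution-level Lemma~\ref{lemT} via the $\frac{d}{dt}X(t)$ grouping — is that the $i\lambda$-terms do \emph{not} appear alone: I should not estimate $\alpha u_{xxx}(0)\overline{u_x(0)}$ crudely but rather recognize, as in \eqref{Obb1}--\eqref{Obb2}, that $i\lambda z\,\overline{u_x(0)} = i\lambda z\overline{z}$, and $i\lambda w\,\overline{u_x(0)} = i\lambda w\overline{z}$; now $i\lambda w = v(0)\cdot(\text{something})$? No — better: use $i\lambda u_x(0) = v_x(0)+f_{1,x}(0) = z+f_{1,x}(0)$ is \emph{not} what appears. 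Instead, the honest route is: write $i\lambda z\,\overline z$ — we cannot remove $\lambda$. But observe we can instead pair these $i\lambda$-quantities with $i\lambda u_x(0)$ coming from a \emph{different} multiplier. The cleanest fix, and the one I would actually carry out: in the terms $-\tfrac\ell2\alpha u_x(0)\overline{u_{xxx}(0)} + \alpha u_{xx}(0)\overline{u_x(0)}$, substitute for $\alpha u_{xxx}(0)$ and $\alpha u_{xx}(0)$ from \eqref{zwHib1}--\eqref{zwHib2}, and for the remaining $i\lambda m w + i\lambda m d z$ etc., use $i\lambda w = i\lambda v(0)$ and relate $i\lambda v$ to $f_2 - \alpha u_{xxxx}$ via \eqref{r2} — no, simplest of all: note $i\lambda(mw+mdz) = -\gamma w - \alpha u_{xxx}(0)$ again circular. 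Therefore the genuinely correct argument is: the sum of all $i\lambda$-terms, after using BOTH \eqref{zwHib1} and \eqref{zwHib2}, collapses because $B$ is symmetric: the quadratic form $i\lambda\,\overline{V}\cdot B V$ has zero real part, and the $-\tfrac\ell2$ and $+1$ coefficients combine precisely so that the $\lambda$-dependence sits inside $\mathrm{Re}(i\lambda\,\overline V\cdot(\text{combination}))$ which one bounds by first isolating $\mathrm{Re}(i\lambda BV\cdot\overline V)=0$ and what is left is $K$-terms plus $Bf_4$-terms, i.e. dissipation-controlled plus $\|F\|\|U\|$. So the plan is: substitute \eqref{r6}/\eqref{zwHib1}--\eqref{zwHib2}, group the $i\lambda BV\cdot\overline V$ piece (which vanishes under $\mathrm{Re}$, or is absorbed because we only need the modulus up to $c\|U\|\|F\|$ after noting $\overline V\cdot B V$ itself is $\le\|U\|_\mathcal{H}^2$... no, $\lambda$ is still there).

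Given the subtlety, the cleanest correct plan I commit to: multiply \eqref{r2} by $q\overline{u_x}$ as above to get the identity with boundary terms $\tfrac\ell2\alpha u_x(0)\overline{u_{xxx}(0)}$ and $\alpha u_{xx}(0)\overline{u_x(0)}$; then substitute $\alpha u_{xxx}(0)$ and $\alpha u_{xx}(0)$ from \eqref{zwHib1}--\eqref{zwHib2}; the resulting expression is a linear combination of $\lambda$ times a Hermitian form in $V=(w,z)$ plus terms linear in $V$ and in $Bf_4$; take real parts — the Hermitian-form-times-$i\lambda$ contribution has vanishing real part, the lower-order terms are bounded by $|V|^2 + |V||Bf_4| \le c(|w|^2+|z|^2) + c\|U\|_\mathcal{H}\|F\|_\mathcal{H} \le c\|U\|_\mathcal{H}\|F\|_\mathcal{H}$ by the dissipation estimate \eqref{dissipative}, and $|u_x(0)|^2 \le c\int_0^\ell|u_{xx}|^2\,dx \le c\|U\|_\mathcal{H}^2$ combined with $|z| = |u_x(0)|\le c\|U\|_\mathcal{H}$ absorbs the rest. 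Since the left-hand side, after integration by parts in $x$, equals $\tfrac\ell2\mathcal{J}(0) - \int_0^\ell\mathcal{J}(x) + |u_{xx}|^2\,dx$ (up to the sign/grouping bookkeeping matching \eqref{Obs131}), taking absolute values yields \eqref{Obs131}. The main obstacle, to be explicit, is verifying that after the substitution the $\lambda$-dependence is confined to a manifestly Hermitian form (so its real part vanishes) rather than leaking into an uncontrolled term; this is exactly the resolvent mirror of the $\frac{d}{dt}X(t)$ telescoping used in Lemma~\ref{lemT}, and it works because $B$ is symmetric and $\mathrm{Re}(i\lambda\,\overline V\cdot BV) = 0$.
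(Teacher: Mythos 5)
Your overall skeleton (multiplier $q\overline{u_x}$ with $q=x-\ell$, integration by parts using $u_x(\ell)=0$, substitution of the boundary forces from \eqref{zwHib1}--\eqref{zwHib2}, dissipation bound \eqref{dissipative} for $w,z$) is the paper's, but the step you yourself flag as ``the main obstacle'' is resolved incorrectly. First, the identification $u_x(0)=z=v_x(0)$ is false: the domain gives $z=v_x(0)$, while \eqref{r1} gives $i\lambda u_x(0)-v_x(0)=f_{1,x}(0)$, i.e.\ $u_x(0)=\frac{1}{i\lambda}\bigl(z+f_{1,x}(0)\bigr)$, not $u_x(0)=z$. Second, even granting that identification, the claimed cancellation ``$\mathrm{Re}\,(i\lambda\,\overline V\cdot BV)=0$'' does not apply: in the boundary contribution both rows of the ODE \eqref{zwHib1}--\eqref{zwHib2} are paired against the \emph{same} factor $\overline{u_x(0)}$, with different weights $\tfrac{\ell}{2}$ and $1$, so one gets $i\lambda\bigl(\tfrac{\ell}{2}m+md\bigr)w\overline z$-type cross terms rather than the Hermitian form $\overline V\cdot BV$ (which would require pairing the first row against $\overline w$). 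The real part of such a cross term is $\lambda\,\mathrm{Im}(w\overline z)$, and the only available control, $|w|^2+|z|^2\le c\|F\|_\mathcal{H}\|U\|_\mathcal{H}$ from \eqref{dissipative}, carries no gain in $1/|\lambda|$; so these terms are genuinely uncontrolled under your scheme, and the argument breaks exactly where you suspected it might.

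The paper's actual mechanism requires no real-part or Hermitian-structure argument and no relation between the weights $\tfrac{\ell}{2}$ and $1$: it estimates each boundary product separately by trading powers of $\lambda$. Write $u_x(0)=\frac{1}{i\lambda}\bigl(z+f_{1,x}(0)\bigr)$, so $|u_x(0)|\le \frac{c}{|\lambda|}\bigl((\|F\|_\mathcal{H}\|U\|_\mathcal{H})^{1/2}+\|F\|_\mathcal{H}\bigr)$, while \eqref{zwHib1}--\eqref{zwHib2} give $|u_{xxx}(0)|+|u_{xx}(0)|\le c|\lambda|\bigl(|w|+|z|\bigr)+c\bigl(|w|+|z|\bigr)\le c|\lambda|(\|F\|_\mathcal{H}\|U\|_\mathcal{H})^{1/2}+\dots$; the factors $1/|\lambda|$ and $|\lambda|$ cancel in the products, yielding $|u_x(0)u_{xxx}(0)|+|u_x(0)u_{xx}(0)|\le c\|U\|_\mathcal{H}\|F\|_\mathcal{H}+c\|F\|_\mathcal{H}^2$. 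This is the exact frequency-domain counterpart of the $\frac{d}{dt}X(t)$ telescoping in Lemma~\ref{lemT} that you were trying to mirror; with it, your remaining steps (treatment of $I_3$, the identity \eqref{Fineq1}, and the final integration by parts of $-\tfrac12\int_0^\ell q\,\frac{d}{dx}\mathcal{J}\,dx$) go through as in the paper. As written, however, your proposal has a genuine gap at the decisive estimate.
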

\begin{proof}
	Multiplying \eqref{r2} by $q\overline{u_x}$ with $q(x)=x-\ell$ and integrating we have
	\begin{align} \label{Eqq11}
		i\lambda\int_{0}^{\ell} vq   \overline{u_x}\ dx+\int_{0}^{\ell}\alpha u_{xxxx} q\overline{u_x}\ dx&=\int_{0}^{\ell}f_2q\overline{u_x}\ dx.  
	\end{align}
Using the resolvent equation \eqref{r1}, we have 
\begin{eqnarray}
		i\lambda\int_{0}^{\ell} vq   \overline{u_x}\ dx&=&		-\dfrac{1}{2}\int_{0}^{\ell} \!\!\!q\dfrac{d}{dx}\left|v \right|^{2}  dx-\int_{0}^{\ell} \! \!qv\overline{f}_{1,x} \; dx \label{inq1}
\end{eqnarray}
Moreover since $u_x(\ell)=0$ we get 
\begin{eqnarray}
	\int_{0}^{\ell} u_{xxxx} q\overline{u_x}\ dx&=&\dfrac{\ell}{2} u_x(0)u_{xxx}(0)-u_{xx}(0)u_x(0)+\int_{0}^{\ell}|u_{xx}|^2dx-\int_{0}^{\ell}u_{xxx}q \overline{u_{xx}}\!dx\nonumber\\
&=&\dfrac{\ell}{2} u_x(0)u_{xxx}(0)-u_{xx}(0)u_x(0)+\int_{0}^{\ell}|u_{xx}|^2dx\nonumber\\
&&-\frac 12 \!\!\int_{0}^{\ell}\!q \frac{d}{dx}|u_{xx}|^2 dx\label{inq2}
\end{eqnarray}
Substitution of the above equations into \eqref{Eqq11} and recalling the definition of $\mathcal{J}$ we get

	\begin{align}
		-\dfrac{1}{2}\int_{0}^{\ell} q\dfrac{d}{dx}\mathcal{J}(x)  dx +\int_{0}^{\ell}\alpha |u_{xx}|^2dx &=\underbrace{\int_{0}^{\ell} \! \!q\left(v\overline{f}_{1,x} \!+\!f_2\overline{u_x}\right) dx}_{:=I_3}
		\nonumber\\
		&+\dfrac{\ell}{2}u_x(0)u_{xxx}(0)-u_{xx}(0)u_x(0)\label{Fineq1}
	\end{align}
Using \eqref{r1},  \eqref{dissipative} and recalling that  $V=(v(0),v_{x}(0))^\top=(z,w)^\top$ we get 
$$
i\lambda u(0)-v(0)=f_1(0),\quad i\lambda u_x(0)-v_x(0)=f_{1,x}(0),\quad 
$$
or 
$$
i\lambda u(0)-w=f_1(0),\quad i\lambda u_x(0)-z=f_{1,x}(0).
$$
To estimate the boundary terms we use the above equations together with inequality  \eqref{dissipative}
and equation \eqref{zwHib2}
\begin{eqnarray*}
\left| u_x(0)u_{xxx}(0)\right|&=&\left| \frac{1}{i\lambda}(z+f_{1,x}(0))\right|\left|u_{xxx}(0)\right|,\\
&\leq &\left( \frac{c}{|\lambda|}\|F\|_\mathcal{H}\|U\|_\mathcal{H}+\frac{c}{|\lambda|}\|F\|_\mathcal{H}^2\right)^{1/2}
	\left({|\lambda|}\|F\|_\mathcal{H}\|U\|_\mathcal{H}+\|F\|_\mathcal{H}^2\right)^{1/2},\\
	&\leq &\left(\|F\|_\mathcal{H}\|U\|_\mathcal{H}+\|F\|_\mathcal{H}^2\right)^{1/2}
	\left(\|F\|_\mathcal{H}\|U\|_\mathcal{H}+\frac c{\lambda}\|F\|_\mathcal{H}^2\right)^{1/2},\\
	&\leq &c \|U\|_\mathcal{H}\|F\|_\mathcal{H}+c\|F\|_\mathcal{H}^2 ,
	\end{eqnarray*}
Using the same arguments we conclude that  
\begin{eqnarray*}
\left|u_x(0)u_{xx}(0)\right|&\leq &c \|U\|_\mathcal{H}\|F\|_\mathcal{H}+c\|F\|_\mathcal{H}^2 ,
	\end{eqnarray*}	
Moreover we have that 
$$ |I_3|\!\leq\!
	c\left\|U \right\|_\mathcal{H}\left\|F \right\|_\mathcal{H}.
	$$
Substitution of the above inequalities into \eqref{Fineq} we get 
	\begin{align*}
		\left|-\int_{0}^{\ell} q\dfrac{d}{dx}\mathcal{J}(x)dx+\int_{0}^{\ell}\left| u_{xx}\right| ^{2} dx\right|\leq c\left\|U \right\|_\mathcal{H}\left\|F \right\|_\mathcal{H}.
	\end{align*}
Integrating by parts we arrive to  \eqref{Obs131}. 
\end{proof}

\begin{theorem}
The semigroup associated with to system   \eqref{b1}, \eqref{Hib1}-\eqref{initial_conditions}   is polynomially stable with a rate of $t^{-1/2}$.

\end{theorem}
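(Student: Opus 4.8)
The plan is to apply the Borichev--Tomilov frequency-domain criterion for polynomial decay. Since, by Theorem~\ref{semigroup}, $e^{\mathcal{A}t}$ is a bounded (indeed contractive) $C_{0}$-semigroup on the Hilbert space $\mathcal{H}$, it suffices to prove two statements: \emph{(i)} $i\mathbb{R}\subset\varrho(\mathcal{A})$; and \emph{(ii)} $\limsup_{|\lambda|\to\infty}|\lambda|^{-2}\,\|(i\lambda-\mathcal{A})^{-1}\|_{\mathcal{L}(\mathcal{H})}<\infty$. These correspond to the exponent $\beta=2$ and therefore yield $\|e^{\mathcal{A}t}U_{0}\|_{\mathcal{H}}\leq C\,t^{-1/2}\|U_{0}\|_{D(\mathcal{A})}$ for every $U_{0}\in D(\mathcal{A})$, using $0\in\varrho(\mathcal{A})$ to identify the graph norm of $\mathcal{A}$ with the $D(\mathcal{A})$-norm.

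To obtain \emph{(i)}, note first that $0\in\varrho(\mathcal{A})$ is already known, and that since $u\in H^{4}(0,\ell)$ and $v\in\mathcal{V}\subset H^{2}(0,\ell)$ for every $U\in D(\mathcal{A})$, the embedding $D(\mathcal{A})\hookrightarrow\mathcal{H}$ is compact; hence $\mathcal{A}$ has compact resolvent and it is enough to rule out eigenvalues on $i\mathbb{R}$. If $\mathcal{A}U=i\lambda U$ with $\lambda\neq0$ and $U=(u,v,V)\in D(\mathcal{A})$, then taking real parts in $(\mathcal{A}U,U)_{\mathcal{H}}$ and invoking \eqref{dissi} forces $w=z=0$; the relations \eqref{r1} together with the domain constraints $w=v(0)=i\lambda u(0)$, $z=v_{x}(0)=i\lambda u_{x}(0)$ give $u(0)=u_{x}(0)=0$, and the scalar boundary identities \eqref{zwHib1}--\eqref{zwHib2} (now with $w=z=0$) give $u_{xx}(0)=u_{xxx}(0)=0$. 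Thus $u$ solves the fourth-order linear ODE $\alpha u_{xxxx}=\lambda^{2}u$ with vanishing Cauchy data at $x=0$, so $u\equiv0$, whence $v\equiv0$, $V=0$ and $U=0$; therefore every purely imaginary number belongs to $\varrho(\mathcal{A})$.

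For \emph{(ii)} I argue by contradiction. If it fails, there exist $\lambda_{n}\in\mathbb{R}$ with $|\lambda_{n}|\to\infty$ and $U_{n}=(u_{n},v_{n},V_{n})\in D(\mathcal{A})$, $V_{n}=(w_{n},z_{n})$, with $\|U_{n}\|_{\mathcal{H}}=1$ and $F_{n}:=(i\lambda_{n}-\mathcal{A})U_{n}$ satisfying $\|F_{n}\|_{\mathcal{H}}=o(|\lambda_{n}|^{-2})$. The dissipation bound \eqref{dissipative} gives $|w_{n}|+|z_{n}|\leq C\|F_{n}\|_{\mathcal{H}}^{1/2}=o(|\lambda_{n}|^{-1})$. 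Reading the second component of \eqref{r6}, $\alpha u_{n,xx}(0)=i\lambda_{n}\big(md\,w_{n}+(J+md^{2})z_{n}\big)+\big(d\gamma\,w_{n}+d\gamma^{*}z_{n}\big)-[Bf_{4,n}]_{2}$, hence
\[
|\alpha u_{n,xx}(0)|\le C|\lambda_{n}|\big(|w_{n}|+|z_{n}|\big)+C\big(|w_{n}|+|z_{n}|\big)+C\|F_{n}\|_{\mathcal{H}}\longrightarrow 0 .
\]
Since $v_{n}(0)=w_{n}$, this yields $\mathcal{J}_{n}(0)=|w_{n}|^{2}+\alpha|u_{n,xx}(0)|^{2}\to0$. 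By Lemma~\ref{lem1},
\[
\int_{0}^{\ell}\mathcal{J}_{n}(x)\,dx\le\int_{0}^{\ell}\big(\mathcal{J}_{n}(x)+|u_{n,xx}(x)|^{2}\big)\,dx\le\tfrac{\ell}{2}\mathcal{J}_{n}(0)+c\|U_{n}\|_{\mathcal{H}}\|F_{n}\|_{\mathcal{H}}\longrightarrow 0,
\]
while $BV_{n}\cdot V_{n}\le C(|w_{n}|^{2}+|z_{n}|^{2})\to0$. As $\|U_{n}\|_{\mathcal{H}}^{2}=\int_{0}^{\ell}\mathcal{J}_{n}(x)\,dx+BV_{n}\cdot V_{n}$, we conclude $\|U_{n}\|_{\mathcal{H}}\to0$, contradicting $\|U_{n}\|_{\mathcal{H}}=1$. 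Combined with the continuity of $\lambda\mapsto(i\lambda-\mathcal{A})^{-1}$ on compact subsets of $i\mathbb{R}$ (a consequence of \emph{(i)}), this proves \emph{(ii)}, and the Borichev--Tomilov theorem then gives the decay rate $t^{-1/2}$.

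The step I expect to be the main difficulty is keeping the boundary bookkeeping in \emph{(ii)} sharp: the dissipation controls only $w=u_{t}(0)$ and $z=u_{xt}(0)$, whereas the multiplier identity underlying Lemma~\ref{lem1} brings in the traces $u_{xxx}(0)$ and $u_{xx}(0)$, which are connected to $w,z$ only through the dynamic boundary conditions \eqref{zwHib1}--\eqref{zwHib2} at the price of one factor $\lambda$ each. This is precisely what forces the loss $|\lambda|^{2}$ in the resolvent estimate --- under the weaker hypothesis $\|F_{n}\|_{\mathcal{H}}=o(|\lambda_{n}|^{-1})$ the term $|\lambda_{n}|(|w_{n}|+|z_{n}|)$ would remain uncontrolled --- and it is consistent with the absence of exponential stability established in Theorem~\ref{nonExp}; it would also be worth checking, by testing the resolvent against a high-frequency family built from the eigenfunctions of $\mathcal{A}_{0}$, that the exponent $2$ is optimal, so that the rate $t^{-1/2}$ cannot be improved.
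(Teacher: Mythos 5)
Your proof is correct and follows essentially the same route as the paper: the multiplier estimate of Lemma~\ref{lem1} combined with the dissipation bound \eqref{dissipative} and the boundary relation \eqref{zwHib2} to trade $u_{xx}(0)$ for $\lambda(w,z)$, yielding a resolvent bound of order $|\lambda|^{2}$ and the rate $t^{-1/2}$ via the Borichev--Tomilov theorem. The only differences are presentational (a contradiction-by-sequences argument instead of the paper's direct inequality) and your explicit verification of $i\mathbb{R}\subset\varrho(\mathcal{A})$, a hypothesis the paper uses but leaves implicit.
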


\begin{proof} From \eqref{Obs131} we have that 
	\begin{eqnarray}\label{Ineqqww}
\int_{0}^{\ell}\mathcal{J}(x)+\left| u_{xx}\right|^{2} dx &\leq& \frac\ell 2\mathcal{J}(0)+ c\left\| U\right\|_\mathcal{H} \left\| F\right\|_\mathcal{H} .
	\end{eqnarray}

	Using the resolvent equation \eqref{zwHib2} together with \eqref{dissipative} we get 
	\begin{eqnarray*}\label{JO}
\mathcal{J}(0)&=& \left| v(0)\right| ^{2}  +\alpha |u_{xx}(0)|^2 \\
&=&\left| z\right| ^{2}+\frac{1}\alpha | i\lambda mdz+i\lambda ({J}+md^2) w+d\gamma z+ \gamma^*w|^2\\
&\leq&c\| U\|_{\mathcal{H}}\| F\|_{\mathcal{H}}+c|\lambda|^2\| U\|_{\mathcal{H}} \| F\|_{\mathcal{H}}.
	\end{eqnarray*}
So from \eqref{Ineqqww} we conclude that 
$$
\| U\|_{\mathcal{H}}^2=\int_{0}^{\ell}\mathcal{J}(x)+\left| u_{xx}\right|^{2} dx+BV\cdot V\leq c|\lambda|^2\| U\|_{\mathcal{H}} \| F\|_{\mathcal{H}}
$$
For $\lambda$ large enough. The above inequality implies that 
$$
\| U\|_{\mathcal{H}}^2\leq c|\lambda|^4 \| F\|_{\mathcal{H}}^2
$$
	Hence, our conclusion follows.
\end{proof}

		\section{Comparison between  hybrid and non hybrid model}\label{sec:comparison} 
\setcounter{equation}{0}

	Next we investigate whether dissipative systems lacking exponential stability can achieve exponential decay by incorporating an additional dissipative mechanism into the dynamic boundary conditions. Specifically, we explore whether weakly dissipative systems, which do not exhibit exponential stability, can be transformed into strongly dissipative systems capable of exponential decay through the introduction of hybrid dissipation at the boundary. The answer to this is negative. We will deal with this problem in the context of Euler Bernoulli model. The following theorem will be key in that follows. 

\begin{theorem}\label{TNovo}  Let $S(t)=e^{{\mathcal{A}}t}$ be a
			$C_0$-semigroup of contractions on Banach space. Then, $S(t)$ is
			exponentially stable if and only if
			\begin{equation}\label{hyp}i\mathbb{R}\subset \varrho(\mathcal{A})
			\quad\text{and}\quad
			\omega_{ess}(S(t))<0,
			\end{equation}
where $\omega_{ess}(S(t))$ is the essential growth bound of the semigroup $S(t)$.
		\end{theorem}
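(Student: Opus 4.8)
\emph{Plan of proof of Theorem \ref{TNovo}.} The strategy is to reduce the statement to the spectral mapping theorem ``modulo the essential spectrum'' and then to exploit the contraction hypothesis together with $i\mathbb{R}\subset\varrho(\mathcal{A})$. The ingredient I will rely on (a classical fact in semigroup theory) is the following: for a $C_0$-semigroup $S(t)=e^{\mathcal{A}t}$ and any $t>0$, the part of $\sigma(S(t))$ lying strictly outside the disk of radius $r_{ess}(S(t))=e^{t\,\omega_{ess}(S)}$ consists of isolated eigenvalues of finite algebraic multiplicity, and each such point is of the form $e^{t\lambda}$ with $\lambda$ an isolated eigenvalue of $\mathcal{A}$ of finite algebraic multiplicity; moreover one has the identities $r(S(t))=e^{t\,\omega_0(S)}$ and $r_{ess}(S(t))=e^{t\,\omega_{ess}(S)}$.

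The implication ``$\Rightarrow$'' is immediate: if $S(t)$ is exponentially stable then $\omega_0(S)<0$, hence the spectral bound satisfies $s(\mathcal{A})\le\omega_0(S)<0$, so $\sigma(\mathcal{A})\subset\{\mathrm{Re}\,\lambda<0\}$ and in particular $i\mathbb{R}\subset\varrho(\mathcal{A})$; moreover $\omega_{ess}(S)\le\omega_0(S)<0$. For ``$\Leftarrow$'', assume $i\mathbb{R}\subset\varrho(\mathcal{A})$ and $\omega_{ess}(S)<0$, fix any $t_0>0$, and set $\rho:=r_{ess}(S(t_0))=e^{t_0\,\omega_{ess}(S)}<1$. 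Consider the set $E:=\sigma(S(t_0))\cap\{z:|z|>\rho\}$. Every point of $E$ is isolated in $\sigma(S(t_0))$, hence isolated in $E$, so $E$ cannot accumulate at any point of the open annulus $\{|z|>\rho\}$ (such an accumulation point would belong to $\sigma(S(t_0))$ and to $E$, contradicting isolatedness). Consequently, if $E\neq\emptyset$, the supremum $M:=\sup\{|z|:z\in E\}$ satisfies $M>\rho$ and is attained, say at $z^{*}\in E$; by the spectral mapping property $z^{*}=e^{t_0\lambda^{*}}$ for some $\lambda^{*}\in\sigma(\mathcal{A})$, so that $\mathrm{Re}\,\lambda^{*}=t_0^{-1}\log M$.

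Now the contraction hypothesis enters: $\|S(t)\|\le 1$ forces $\omega_0(S)\le 0$, hence $\mathrm{Re}\,\lambda\le 0$ for all $\lambda\in\sigma(\mathcal{A})$, so $\mathrm{Re}\,\lambda^{*}\le 0$, i.e. $M\le 1$. If $M=1$ then $\mathrm{Re}\,\lambda^{*}=0$, so $\lambda^{*}\in i\mathbb{R}\cap\sigma(\mathcal{A})$, contradicting $i\mathbb{R}\subset\varrho(\mathcal{A})$; therefore $M<1$. Combining with the case $E=\emptyset$ (for which $r(S(t_0))=\rho<1$), we obtain $r(S(t_0))=\max\{\rho,M\}<1$, and since $r(S(t_0))=e^{t_0\,\omega_0(S)}$ this yields $\omega_0(S)<0$, i.e. exponential stability of $S(t)$.

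The only delicate point is the correct invocation of the spectral mapping theorem outside the essential spectral radius, namely that the peripheral part of $\sigma(S(t_0))$ is discrete and coincides with the $e^{t_0\cdot}$-image of the corresponding eigenvalues of $\mathcal{A}$; I would quote this from the semigroup literature rather than reprove it. Once it is available, the remainder is purely topological (a set of isolated points has no accumulation point among itself) combined with the two elementary facts that contractivity gives $\mathrm{Re}\,\sigma(\mathcal{A})\le 0$ and that $i\mathbb{R}\subset\varrho(\mathcal{A})$ excludes $\mathrm{Re}\,\lambda=0$.
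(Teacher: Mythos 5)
Your argument is correct, and all the facts you quote (the identities $r(S(t))=e^{t\,\omega_0(S)}$ and $r_{ess}(S(t))=e^{t\,\omega_{ess}(S)}$, the isolated-eigenvalue structure of $\sigma(S(t))$ outside the essential spectral radius, and the point spectral mapping theorem) are standard and citable from Engel--Nagel, which is also the paper's source. The route differs from the paper's in packaging rather than in substance: the paper works at the level of the generator, quoting the identity $\omega_0=\max\{\omega_{ess},\omega_\sigma(\mathcal{A})\}$ together with the finiteness of $\sigma(\mathcal{A})\cap\{\operatorname{Re}\lambda\ge c\}$ for every $c>\omega_{ess}$, so that sufficiency reduces to showing $\omega_\sigma(\mathcal{A})<0$, which follows because contractivity (via Hille--Yosida) confines $\sigma(\mathcal{A})$ to the closed left half-plane and $i\mathbb{R}\subset\varrho(\mathcal{A})$ plus finiteness turns the supremum of real parts into a maximum over finitely many strictly negative numbers. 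You instead argue on the single operator $S(t_0)$ and replace that finiteness statement by your compactness/isolation argument showing that the peripheral part of $\sigma(S(t_0))$ attains its maximal modulus $M$, then exclude $M=1$ using $i\mathbb{R}\subset\varrho(\mathcal{A})$ through the point spectral mapping theorem; this plays exactly the role of the paper's finite set $\mathcal{I}_{\omega_{ess}+\delta}$, so the two proofs are equivalent in content, yours avoiding the explicit max-formula at the cost of the extra spectral-mapping step. One place where your version is genuinely cleaner is the necessity direction: you deduce $i\mathbb{R}\subset\varrho(\mathcal{A})$ from the elementary inequality $s(\mathcal{A})\le\omega_0(S)<0$, whereas the paper appeals to the strong-stability theorem it cites as \cite{DuyBatt}, which is not needed for this implication.
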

		\begin{proof}Here we use \cite[Corollary~2.11, p.~258]{EngelNagel} establishing that the type $\omega$ of the semigroup $e^{\mathcal{A}t}$ verifies
			\begin{equation}\label{idxx}
			\omega=\max\{\omega_{ess}, \omega_\sigma(\mathcal{A})\},
			\end{equation}
			where 
$\omega_\sigma(\mathcal{A})$ is the upper bound of the spectrum of $\mathcal{A}$. 	Moreover, for any $c>\omega_{ess}$, the set $\mathcal{I}_c:=\sigma(\mathcal{A})\cap\{\lambda\in \mathbb{C}:\;\; \mbox{Re}\lambda \geq c\}$ is  finite.

Let us suppose that \eqref{hyp} is valid.
Since the essential type of the semigroup $\omega_{ess}$ is negative, identity \eqref{idxx} states that the type of the semigroup will be negative provided $\omega_\sigma(\mathcal{A})<0$. 

If
$ \omega_\sigma (\mathcal {A}) \leq \omega_ {ess} $ then we have nothing to prove. Let us suppose that $ \omega_ \sigma (\mathcal {A})> \omega_ {ess} $.  From \eqref{hyp} and  Hille-Yosida Theorem  we have $ \overline{\mathbb {C} _ +} \subset \varrho (\mathcal{A}) $, hence
$ \omega_\sigma (\mathcal {A})\leq 0 $.  On the other hand 
$ \mathcal{I}_ {\omega_{ess}+\delta} $ is finite for $\delta>0$ verifying  $ \omega_ {ess} + \delta <0 $ and $ \omega_ {ess} + \delta <\omega_ \sigma (\mathcal {A}) $. Therefore  we have 
$$
\omega_ \sigma (\mathcal {A})=\sup \mbox{Re }\sigma(\mathcal{A})=\sup \mbox{Re } \mathcal{I}_ {\omega_{ess}+\delta}<0.
$$ 
Hence, the sufficient condition follows.

Reciprocally, let us suppose that the semigroup $S(t)$ is exponentially stable, in particular it goes to zero. Then, by   \cite[Theorem~1.1]{DuyBatt}  we have that $i\mathbb{R}\subset \varrho(\mathcal{A})$. Moreover, since the type $\omega$ verifies \eqref{idxx}, we have that
			$$
			\omega_{ess}\leq \max\{\omega_{ess}, \omega_\sigma(\mathcal{A})\}=\omega<0.
			$$
			Then, our conclusion follows.
\end{proof}
Note that the above characterization is valid for any Banach space.


\subsection*{Family of Euler-Bernoulli beam models }	
Here we analyze a family of Euler-Bernoulli beam models as visco elastic and thermo elastic and compare them with their corresponding extensions within a hybrid model framework. Specifically, we consider the governing equation \eqref{TTrge.1} with various constitutive laws:

\begin{equation}\label{TTrge.1}
\rho u_{tt} + M_{xx} = 0, \quad (x,t) \in ]0,\ell[ \times \mathbb{R}^{+},
\end{equation}
subject to the boundary conditions:

\begin{equation}\label{bcnh}
M(0,t) = M_x(0,t) = 0, \quad u(\ell,t) = u_x(\ell,t) = 0, \quad t \geq 0.
\end{equation}
As examples, we consider the following constitutive laws for the moment \(M\):
\begin{itemize}
    \item The conservative elastic model, where \(M\) is defined as in Section 2.
    \item The viscoelastic model of Kelvin-Voigt type, where \(M = \alpha u_{xx} + \alpha_0 u_{xxt}\).
    \item The thermoelastic model, where \(M = \alpha u_{xx} + m \theta\).
\end{itemize}

In the thermoelastic case, the temperature \(\theta\) is incorporated into the model, expanding the phase space to \(\mathbf{H} = \mathbf{H}_0 \times L^2(0,\ell)\), where $\mathbf{H}_0 $ is defined in \eqref{PhaseT}. The corresponding phase space vector is \(\mathcal{U} = (u, v, \theta) \in \mathbf{H}\), where \(v = u_t\) is introduced to reformulate the Euler-Bernoulli equation as a first-order system.

In this section, we define a family of Euler-Bernoulli models as follows. Let \(\mathbf{H} = \mathbf{H}_0 \times \mathbf{H}_1\) denote the phase space with norm $\|\cdot\|_{\mathbf{H}}$, where \(\mathbf{H}_0\) is defined in \eqref{PhaseT}, and \(\mathbf{H}_1\) is a suitable Hilbert space which is the complement when we consider thermo elastic models for example. We denote by \(\mathcal{A}_T\) the infinitesimal generator of a contraction semigroup \(\mathbf{T}(t) = e^{t \mathcal{A}_T}\), defined on \(\mathbf{H}\) with domain \(D(\mathcal{A}_T) \subset \mathbf{H}\). 
So we have that 
\begin{equation}\label{visco2}
 \operatorname{Re}\langle \mathcal{A}_{T} \mathcal{U},\mathcal{U}\rangle_{\mathbf{H}} 
  \leq 0, \quad \forall \; \mathcal{U}\in  D(\mathcal{A}_T). 
\end{equation}
Let us introduce the projection operator \(\mathbb{P}_0: \mathbf{H} \to \mathbf{H}_0\) defined as:

\begin{equation}
\mathbb{P}_0\; \mathcal{U} = \begin{pmatrix}
u \\
v
\end{pmatrix}, \quad \forall \mathcal{U} \in \mathbf{H}.
\end{equation}
Here we assume that the domain of the operator \(\mathcal{A}_T\) satisfies:

\begin{equation}
D(\mathcal{A}_T) \subset \left\{ \mathcal{U} \in \mathbf{H} : M \in H^2(0,\ell), \, u, v \in \mathcal{V}, \, \text{satisfying \eqref{bcnh}} \right\}.
\end{equation}
The corresponding abstract equation 
\begin{equation}\label{GODE-T}
 \frac{d}{dt}\mathcal{U}(t)=\mathcal{A}_{T}\,\mathcal{U}(t),\quad \mathcal{U}(0)=\mathcal{U}_0\in \mathbf{H}
\end{equation}

\subsection*{Extension to hybrid models}
We extend model system \eqref{TTrge.1}--\eqref{bcnh} 
to the hybrid model by changing the boundary condition at the end $x=0$ in  \eqref{bcnh} to a dynamical boundary condition 
where $u(0,t)$ and $u_x(0,t)$ are given by the solution of  the system 
\begin{eqnarray}
mu_{tt}(0,t)+md u_{ttx}(0,t)+\gamma u_t(0,t)+M_{x}(0,t)&=&0\label{GHib1}\\
mdu_{tt}(0,t)+(J+md^2) u_{ttx}(0,t)+d\gamma u_t(0,t)+d\gamma^*u_{xt}(0,t)-M(0,t)&=&0\label{GHib2}
\end{eqnarray}
and at the end $x=\ell$ we leave the same boundary condition as in \eqref{bcnh} 
\begin{equation}\label{bcnhh}
 u(\ell,t)=u_x(\ell,t)=0,\quad t\geq 0
\end{equation}
As in section 3,  we  introduce the vector  $V=(u_{t}(0,t),u_{xt}(0,t))^\top=(w(t),z(t))^\top$, hence  the boundary condition \eqref{GHib1}-\eqref{GHib2} can be written as 
\begin{equation}\label{yzHib1}
BV_t+KV=\Gamma(\mathcal{U}), 
\end{equation}
where as in \eqref{MatrC} we have 
\begin{equation}\label{yMatrC}
B=\begin{pmatrix}
m&md\\
md&J+md^2
\end{pmatrix},\quad 
K=\begin{pmatrix}
\gamma&0\\
d\gamma &d\gamma^* 
\end{pmatrix}, \quad \Gamma(\mathcal{U})=\alpha\begin{pmatrix}
- M_{x}(0,t)\\
M(0,t)
\end{pmatrix}.
\end{equation}
The operator $\Gamma$ is a trace operator associated to the flector moment and the shear force. 
Let us introduce the phase space $\mathcal{H}$ given by 
\begin{equation}\label{PhaseH2}
\mathcal{H}= \mathbf{H}\times \mathbb{C}^2.
\end{equation}
Denoting  $U:=(\mathcal{U}, {V})^\top$, where $\mathcal{U}\in \mathbf{H}$ and ${V}:=(z,w)\in \mathbb{C}^2$, we define the  norm of $\mathcal{H}$ 
$$
\|U\|_{\mathcal{H}}^2=\|\mathcal{U}\|_{\mathbf{H}}^2+B V\cdot V
$$
let us introduce the operator $\mathcal{A}$ associated to the hybrid model 

\begin{equation}\label{DefAT}
\mathcal{A}
:=
\begin{pmatrix}
\mathcal{A}_{T}&\mathbf{0}\\
B^{-1}\Gamma&B^{-1}K
\end{pmatrix}
\end{equation}
hence for $U:=(\mathcal{U}, {V})^\top$  we get
$$
\mathcal{A}U=\begin{pmatrix}
\mathcal{A}_{T}\mathcal{U}\\
B^{-1}\Gamma \mathcal{U}+B^{-1}KV
\end{pmatrix}=\begin{pmatrix}
\mathcal{A}_{T}\mathcal{U}\\
0
\end{pmatrix}+\begin{pmatrix}
0\\
B^{-1}KV
\end{pmatrix}+\begin{pmatrix}
\mathbf{0}\\
B^{-1}\Gamma \mathcal{U}
\end{pmatrix},\quad \forall U\in D(\mathcal{A}).
$$
Here we assume 
\begin{description}
\item{\bf H1 :}
$\mathcal{A}$ is  the infinitesimal generator of a contractions semigroups $\mathcal{T}(t)=e^{t\mathcal{A}}$.
\item{\bf H2 :} The operator  $\mathcal{A}$ verifies the dissipative condition
\begin{equation}\label{visco2}
 \operatorname{Re}\langle \mathcal{A} U,U\rangle_{\mathcal{H}} 
  \leq -\gamma |\widetilde{w}|^2 - \gamma_2 |\widetilde{z}|^2 \leq 0, \quad \forall \; U\in  D(\mathcal{A}). 
\end{equation}
\item{\bf H3 :} The domain of the operator \(\mathcal{A}\) satisfies:
\begin{equation}\label{DomA}
D(\mathcal{A}) \subset \left\{ U \in \mathcal{H} : M \in H^2(0,\ell), \, u, v \in \mathcal{V}, \, \text{satisfying \eqref{GHib1},\eqref{GHib2}, \eqref{bcnhh}} \right\}.
\end{equation}

\end{description}
The Cauchy problem associated to the hybrid model is given by 
$$
U_t-\mathcal{A}U=0, \quad U(0)=U_0.
$$

The main objective of this section is to show that the semigroup $\mathbf{T}(t) = e^{t \mathcal{A}_T}$ is exponentially stable
if and only the semigroup $\mathcal{T}(t)=e^{t\mathcal{A}}$ is also exponentially stable. This means that the dissipation produced by the ODE in \eqref{GHib1}-\eqref{GHib2}, of the hybrid model, is not relevant to stabilize exponentially the system.
Let us introduce the space
\begin{equation}\label{Phase2}
\widetilde{\mathbf{H}}=\mathbf{H}\times \{0\}\times \{0\},
\end{equation}
intended as the extended phase space.
Let us denote by $\Pi$ the projection of $\mathcal{H}$ onto $\widetilde{\mathbf{H}}$:
\begin{equation}\label{Phase3}
\Pi(\mathcal{U},{V}) =(\mathcal{U},0).
\end{equation}

Under the above conditions we can state the following Lemma:
 \begin{lemma}\label{T000}
 Let us suppose that  hypotheses {\bf H1}, {\bf H2}, {\bf H3} and   $0\in \varrho(\mathcal{A})$ then 
the difference $\mathcal{T}(t)-\mathbf{T}(t)\Pi$ is a compact operator over $\mathcal{H}$. Hence the corresponding essential types $\omega_{\text{ess}}(\mathcal{T})$ and $\omega_{\text{ess}}(\mathbf{T}(t)\Pi)$  are equal.
\end{lemma}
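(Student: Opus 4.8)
The plan is to mimic the compactness argument already carried out in Theorem \ref{nonExp}, now in the abstract setting. Writing $U=e^{t\mathcal A}U_0=\mathcal T(t)U_0$ and using the block decomposition \eqref{DefAT}, the first component $\mathcal U$ of $U$ satisfies $\mathcal U_t-\mathcal A_T\mathcal U=0$, so that $\mathbb P_0$-component of $\mathcal T(t)U_0$ is exactly $\mathbf T(t)\Pi U_0$ restricted to the first two coordinates. Hence the difference $\mathcal T(t)-\mathbf T(t)\Pi$ is supported on the last two (finite-dimensional, $\mathbb C^2$) coordinates, namely on $V=(w,z)$. Thus it suffices to show that the map $U_0\in B_R(0)\subset\mathcal H\mapsto V(\cdot)=(w(\cdot),z(\cdot))\in L^2(0,T;\mathbb C^2)$ takes values in a relatively compact set; this gives compactness of the difference operator over $\mathcal H$, and then the equality of essential types $\omega_{\text{ess}}(\mathcal T)=\omega_{\text{ess}}(\mathbf T(t)\Pi)$ follows from the stability of the essential spectral radius under compact perturbation (this is the standard fact used implicitly in Theorem \ref{nonExp} via Remark \ref{Rem1}).

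For the compactness of $V$, I would reproduce the two-equation elimination of Theorem \ref{nonExp}. First, from \eqref{GHib2} the quantity $\Phi(t):=mdu_t(0,t)+(J+md^2)u_{tx}(0,t)$ satisfies $\Phi_t=-d\gamma u_t(0,t)-d\gamma^*u_{xt}(0,t)+M(0,t)$; the dissipation identity \eqref{visco2} (hypothesis {\bf H2}) controls $\int_0^T(|w|^2+|z|^2)\,d\tau$, and the abstract analogue of Lemma \ref{lemT} — or directly hypothesis {\bf H3}, which places $M\in H^2(0,\ell)$ hence $M(0,t)$ bounded — shows $\Phi$ lies in a bounded subset of $H^1(0,T)$ for $U_0\in B_R(0)$, so $\Phi=F_1$ is bounded in $H^1(0,T)$. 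Second, integrating \eqref{problem} in $t$ and using $0\in\varrho(\mathcal A)$ exactly as before, one gets that $\widehat{\mathcal U}=\int_0^t\mathcal U\,ds+\Theta$ solves the same Cauchy problem with data in $D(\mathcal A)$, so by regularity $M$ and its antiderivative gain a derivative in $x$; an application of Theorem \ref{Kim} (with $\beta$ one order above the regularity needed for the trace $M_x(0,t)$) shows $M_x(0,t)$ and its time-antiderivative lie in a compact set of $L^2(0,T)$. Integrating \eqref{GHib1} then gives $mu_t(0,t)+mdu_{tx}(0,t)=-\gamma u(0,t)-\int_0^tM_x(0,\tau)\,d\tau=:F_2$ compact in $L^2(0,T)$. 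Solving the two relations $\Phi=F_1$, $mw+mdz=F_2$ — a $2\times2$ linear system with invertible coefficient matrix, since $\det\bigl(\begin{smallmatrix}md&J+md^2\\ m&md\end{smallmatrix}\bigr)=-mJ\neq0$ — recovers $w=u_t(0,t)$ and $z=u_{tx}(0,t)$ as a continuous image of a bounded-in-$H^1$ plus compact-in-$L^2$ pair, hence $V$ ranges in a relatively compact subset of $L^2(0,T;\mathbb C^2)$.

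The main obstacle is the step proving $\Phi$ is bounded in $H^1(0,T)$ and $M_x(0,\cdot)$, together with its antiderivative, is compact in $L^2(0,T)$: in Theorem \ref{nonExp} this rested on the explicit $H^4(0,\ell)$-regularity of $u$ and the concrete observability estimate of Lemma \ref{lemT}, whereas here we only have the abstract hypotheses {\bf H1}--{\bf H3} and $0\in\varrho(\mathcal A)$. The resolution is to use {\bf H3} — which guarantees $M\in H^2(0,\ell)$ on $D(\mathcal A)$ — combined with the smoothing gained from $0\in\varrho(\mathcal A)$ applied to the integrated equation, so that $\widehat{\mathcal U}\in C([0,T];D(\mathcal A))$ and thus $M$ associated with $\widehat{\mathcal U}$ lies in $C([0,T];H^2(0,\ell))$ and its derivative $M$ (associated with $\mathcal U$) in $C([0,T];H^2(0,\ell))$ as well; Theorem \ref{Kim} then upgrades the $x$-trace $M_x(0,\cdot)$ from merely bounded to relatively compact in $C([0,T];\mathbb C)\subset L^2(0,T)$. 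One must be slightly careful that the abstract $\Gamma$ in \eqref{yMatrC} is exactly the trace $(-M_x(0,t),M(0,t))$, which is part of {\bf H3}, so no additional regularity theory for the underlying PDE is needed beyond what the hypotheses already encode.
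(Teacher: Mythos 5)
Your opening reduction is the same as the paper's: you use the lower-triangular structure of $\mathcal{A}$ in \eqref{DefAT} to conclude that the first component of $\mathcal{T}(t)U_0$ coincides with $\mathbf{T}(t)\mathcal{U}_0$, so that the difference $\mathcal{T}(t)-\mathbf{T}(t)\Pi$ acts only through the $\mathbb{C}^2$-component $V=(w,z)$. The divergence, and the genuine gap, is in how you make the $V$-part compact. You propose to rerun the elimination of Theorem \ref{nonExp}: show $\Phi=mdw+(J+md^2)z$ bounded in $H^1(0,T)$, show $mw+mdz$ compact in $L^2(0,T)$, and invert the $2\times 2$ matrix (the determinant computation $-mJ\neq 0$ is fine). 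But the $H^1(0,T)$ bound on $\Phi$ requires, through \eqref{GHib2}, an $L^2(0,T)$ estimate on the trace $M(0,\cdot)$ that is \emph{uniform over} $U_0\in B_R(0)\subset\mathcal{H}$. In the concrete setting of Theorem \ref{nonExp} this is exactly what Lemma \ref{lemT} supplies, and its multiplier proof is tied to the specific law $M=\alpha u_{xx}$. In the abstract setting of Lemma \ref{T000} no such hidden-regularity estimate is among the hypotheses: invoking ``the abstract analogue of Lemma \ref{lemT}'' begs the question (it would have to be re-proved for each constitutive law, Kelvin--Voigt, thermoelastic, etc., defeating the purpose of the abstract lemma), and your fallback ``\textbf{H3} places $M\in H^2(0,\ell)$, hence $M(0,t)$ bounded'' does not work: \textbf{H3} only describes $D(\mathcal{A})$, the mild solution issued from $U_0\in\mathcal{H}$ need not lie in $D(\mathcal{A})$, and even for smooth data the resulting trace bound would be in terms of $\|U_0\|_{D(\mathcal{A})}$ rather than $\|U_0\|_{\mathcal{H}}\leq R$, so it gives no uniform bound on bounded sets of $\mathcal{H}$ — which is what compactness of the operator requires.

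The paper's proof is organized precisely to avoid any pointwise-in-time trace estimate: it uses the variation-of-constants formula $V(t)=e^{tB^{-1}K}V_0+\int_0^t e^{(t-s)B^{-1}K}B^{-1}\Gamma\,\mathcal{U}(s)\,ds$, so that only the time-integrated traces $\int_0^t M(0,s)\,ds$ and $\int_0^t M_x(0,s)\,ds$ enter; these are controlled by applying \textbf{H3} to the integrated trajectory $\widehat{U}=\int_0^t U\,ds+\Theta\in C([0,T];D(\mathcal{A}))$ (available because $0\in\varrho(\mathcal{A})$), after which Theorem \ref{Kim} upgrades boundedness of $\int_0^t M\,ds$ in $C([0,T];H^{2}(0,\ell))$ to compactness of the traces. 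If you replace your $\Phi$-step by this Duhamel argument the proof closes; alternatively, note that once the first components coincide, $\mathcal{T}(t)-\mathbf{T}(t)\Pi$ is a bounded operator whose range lies in the finite-dimensional subspace $\{0_{\mathbf{H}}\}\times\mathbb{C}^2$, hence is automatically compact. Two smaller inaccuracies: relative compactness of $t\mapsto V(t)$ in $L^2(0,T;\mathbb{C}^2)$ does not by itself yield compactness of the fixed-$t$ operator (it is the finite-dimensional range that does), and the integrated-equation argument gives compactness only of the antiderivative $\int_0^t M_x(0,s)\,ds$, not of $M_x(0,\cdot)$ itself as you assert (harmless here, since only the antiderivative is used).
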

\begin{proof}
Note that the solution of $U_t-\mathcal{A}U=0$, $U(0)=U_0$ can be written as
$$
\begin{pmatrix}
\mathcal{U}\\
V
\end{pmatrix}_t=\begin{pmatrix}
\mathcal{A}_{T}\mathcal{U}\\
0
\end{pmatrix}+\begin{pmatrix}
0\\
B^{-1}KV
\end{pmatrix}+\begin{pmatrix}
\mathbf{0}\\
B^{-1}\Gamma \mathcal{U}
\end{pmatrix}
$$
with $ U_0=(\mathcal{U}_0, V_0)^\top$ which implies that
$$
\mathcal{U} =e^{t\mathcal{A}_{T}}\mathcal{U}_0,\quad \text{and}\quad V_0 =e^{tB^{-1}K}V_0+\int_0^te^{(t-s)B^{-1}K}B^{-1}\Gamma\mathcal{U}(s)\;ds.
$$
Therefore
\begin{equation}\label{eeqq1} 
U(t)-\begin{pmatrix}
e^{t\mathcal{A}_{T}}\mathcal{U}_0\\
0
\end{pmatrix}=\begin{pmatrix}
\mathbf{0}\\
e^{tB^{-1}K}{V}_0+\int_0^te^{(t-s)B^{-1}K}B^{-1}\Gamma\, \mathcal{U}(s)\;ds
\end{pmatrix}.
\end{equation}
Following the procedure outlined in Theorem \ref{nonExp}, we establish that \(\int_0^t U \, ds \in D(\mathcal{A})\). Using hypotheses \eqref{DomA} we get 
\[
\int_0^t M \, ds \in C([0,T]; H^2(0,\ell)), \quad M \in C([0,T]; L^2(0,\ell)).
\]
Consequently, by Theorem \ref{Kim} we have that \(\int_0^t M \, ds\) lies in a compact subset of \(C([0,T]; H^{2-\delta}(0,\ell))\). Thus, \(\int_0^t M_x \, ds\) lies in a compact subset of \(C([0,T]; H^{1-\delta}(0,\ell))\) for any initial data $U_0$ in a bounded set $B$ of $\mathcal{H}$. For \(0 < \delta < \frac{1}{2}\), this implies that \(\int_0^t M_x(0,s) \, ds\) lies in a compact subset of \(C([0,T])\). Since  \(e^{(t-s)B^{-1}K}\) and \(B^{-1}\) are bounded operators and recalling the definition of $\Gamma$ given in \eqref{yMatrC} we conclude that
\[
\mathfrak{G}(t) = \int_0^t e^{(t-s)K} B^{-1} \Gamma \mathcal{U}(s) \, ds
\]
lies in a compact set. Therefore, the operator
\[
\mathcal{T}(t) - \mathbf{T}(t)\Pi
\]
is compact, and the result follows.

\end{proof}

 \begin{theorem}\label{T00}
The semigroup $\mathcal{T}(t)=e^{\mathcal{A}t}$ is exponentially stable if and only if  $ \mathbf{T}(t)=e^{\mathcal{A}_Tt}$ is exponentially stable. \end{theorem}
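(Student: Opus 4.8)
The plan is to combine Lemma~\ref{T000} with the spectral characterization of exponential stability in Theorem~\ref{TNovo}. The key observation is that Lemma~\ref{T000} already gives us $\omega_{\text{ess}}(\mathcal{T})=\omega_{\text{ess}}(\mathbf{T}(t)\Pi)$, and since $\mathbf{T}(t)\Pi$ is essentially just $\mathbf{T}(t)$ acting on the invariant subspace $\widetilde{\mathbf{H}}\cong\mathbf{H}$ padded by a finite-dimensional (hence compact) block, one has $\omega_{\text{ess}}(\mathbf{T}(t)\Pi)=\omega_{\text{ess}}(\mathbf{T}(t))$. So the essential growth bounds of the hybrid and non-hybrid semigroups coincide. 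By Theorem~\ref{TNovo}, exponential stability of either semigroup is equivalent to the conjunction of $i\mathbb{R}\subset\varrho(\cdot)$ and negativity of the essential growth bound; since the essential bounds agree, it remains to transfer the condition $i\mathbb{R}\subset\varrho(\mathcal{A})$ to $i\mathbb{R}\subset\varrho(\mathcal{A}_T)$ and back.

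First I would argue the "only if" direction: assume $\mathcal{T}(t)=e^{\mathcal{A}t}$ is exponentially stable. By Theorem~\ref{TNovo}, $\omega_{\text{ess}}(\mathcal{T})<0$, hence $\omega_{\text{ess}}(\mathbf{T})=\omega_{\text{ess}}(\mathcal{T})<0$ by the lemma. It then remains to show $i\mathbb{R}\subset\varrho(\mathcal{A}_T)$. For this, suppose $i\lambda$ is in the spectrum of $\mathcal{A}_T$; one constructs (via the triangular structure of $\mathcal{A}$ in \eqref{DefAT}) a corresponding approximate eigenvector of $\mathcal{A}$ at $i\lambda$, using that $B^{-1}K$ has no imaginary spectrum (its eigenvalues have positive real part since $B,K$ are positive definite) so the $(2,2)$-block $i\lambda - B^{-1}K$ is invertible and one can solve the lower block given the upper block. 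This forces $i\lambda\in\sigma(\mathcal{A})$, contradicting exponential stability of $\mathcal{T}$ via Theorem~\ref{TNovo}. Hence $i\mathbb{R}\subset\varrho(\mathcal{A}_T)$ and, together with $\omega_{\text{ess}}(\mathbf{T})<0$, Theorem~\ref{TNovo} yields exponential stability of $\mathbf{T}(t)$.

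For the "if" direction, assume $\mathbf{T}(t)=e^{\mathcal{A}_Tt}$ is exponentially stable. Then $\omega_{\text{ess}}(\mathbf{T})<0$ by Theorem~\ref{TNovo}, so $\omega_{\text{ess}}(\mathcal{T})<0$ by Lemma~\ref{T000}. We must check $i\mathbb{R}\subset\varrho(\mathcal{A})$. Given $i\lambda\in i\mathbb{R}$, solve $i\lambda U-\mathcal{A}U=F$ blockwise: the upper component gives $i\lambda\mathcal{U}-\mathcal{A}_T\mathcal{U}=F_1$, solvable since $i\lambda\in\varrho(\mathcal{A}_T)$ because $\mathbf{T}$ is exponentially stable and by Theorem~\ref{TNovo}; then the lower block $(i\lambda-B^{-1}K)V=B^{-1}\Gamma\mathcal{U}+F_2$ is solvable since $i\lambda-B^{-1}K$ is invertible. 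One checks the resulting $U$ lies in $D(\mathcal{A})$ and depends boundedly on $F$ (the trace operator $\Gamma$ is bounded on $D(\mathcal{A}_T)$ by hypothesis {\bf H3}), so $i\lambda\in\varrho(\mathcal{A})$. Applying Theorem~\ref{TNovo} once more gives exponential stability of $\mathcal{T}(t)$.

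The main obstacle is the careful bookkeeping around the $i\mathbb{R}$-resolvent transfer, in particular making the approximate-eigenvector construction in the "only if" direction rigorous: one needs that a Weyl sequence for $\mathcal{A}_T$ at $i\lambda$ lifts to a Weyl sequence for $\mathcal{A}$, which requires controlling the image of the trace operator $\Gamma$ applied to that sequence (it is controlled since the sequence stays bounded in $D(\mathcal{A}_T)$ and $\Gamma$ is relatively bounded there). Everything else — the equality of essential bounds and the two invocations of Theorem~\ref{TNovo} — is routine once Lemma~\ref{T000} is in hand.
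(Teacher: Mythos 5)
Your reduction via Lemma~\ref{T000} and Theorem~\ref{TNovo} to transferring the condition $i\mathbb{R}\subset\varrho(\cdot)$ is exactly the paper's strategy, but the way you carry out the transfer has a genuine gap: you treat \eqref{DefAT} as a true block lower-triangular operator with product domain $D(\mathcal{A}_T)\times\mathbb{C}^2$, and it is not. The block form is only formal: $D(\mathcal{A}_T)$ carries the free-end conditions $M(0)=M_x(0)=0$ of \eqref{bcnh}, whereas elements of $D(\mathcal{A})$ satisfy the dynamic conditions \eqref{GHib1}--\eqref{GHib2} together with the trace compatibility $w=v(0)$, $z=v_x(0)$ (this is explicit in the concrete domain of Section~2, and is the whole content of the hybrid coupling). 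Consequently, in your ``if'' direction the blockwise solve fails: $\mathcal{U}=(i\lambda-\mathcal{A}_T)^{-1}F_1$ automatically has $\Gamma\mathcal{U}=0$, and the $V$ you then obtain from the finite-dimensional block will in general not equal $(v(0),v_x(0))$, so the pair $(\mathcal{U},V)$ is neither in $D(\mathcal{A})$ nor a solution of the hybrid resolvent equation --- you have inverted a different operator (wrong boundary conditions at $x=0$). Symmetrically, in your ``only if'' direction any Weyl sequence $W_n\subset D(\mathcal{A}_T)$ satisfies $\Gamma W_n=0$, so your recipe gives $V_n=0$, and $(W_n,0)$ generically violates the compatibility conditions defining $D(\mathcal{A})$; relative boundedness of the trace operator $\Gamma$ on the graph norm does not repair membership in the domain, so you have not produced an admissible approximate eigenvector of $\mathcal{A}$.

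What is missing is the mechanism the paper actually uses to decouple the system on the imaginary axis. First, once the essential growth bound is negative, spectrum with real part above it consists of finitely many eigenvalues of finite multiplicity, so Weyl sequences are unnecessary: one may work with genuine eigenvectors. Second, the passage from $\mathcal{A}$ to $\mathcal{A}_T$ is done with the dissipativity hypothesis \textbf{H2}, not with a resolvent inversion: if $\mathcal{A}\widetilde W=i\lambda\widetilde W$, taking the real part of the inner product gives $\gamma|\widetilde w|^2+\gamma_2|\widetilde z|^2\le 0$, hence $\widetilde V=0$, and then the lower block of the eigenvalue equation forces $\Gamma\widetilde{\mathcal{U}}=0$, so $\widetilde{\mathcal{U}}$ \emph{does} satisfy \eqref{bcnh} and is an eigenvector of $\mathcal{A}_T$. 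In the opposite direction the paper lifts an eigenvector $W$ of $\mathcal{A}_T$ to $(W,0,0)$, using $\Gamma W=0$ so that the lifted vector is an exact eigenvector of the block formula; if you follow that route you must still verify $(W,0,0)\in D(\mathcal{A})$ (i.e.\ the compatibility $v(0)=v_x(0)=0$ or whatever the abstract domain in \textbf{H3} imposes), a point your construction does not supply and which needs an explicit argument rather than the triangular-structure shortcut.
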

\begin{proof}
Note that \(\mathcal{T}(t)\) and \(\mathbf{T}(t)\) are contraction semigroups. By Lemma \ref{T000} and Theorem \ref{TNovo}, to establish the exponential stability of \(\mathcal{T}(t)\) or \(\mathbf{T}(t)\), it suffices to show that the imaginary axis is contained in the resolvent set of their respective infinitesimal generators.

Suppose \(\mathcal{T}(t)\) is exponentially stable, but \(\mathbf{T}(t)\) is not. This implies that \(i\mathbb{R} \not\subset \varrho(\mathcal{A}_T)\), so there exists an eigenvector \(0 \neq W \in D(\mathcal{A}_T)\) and \(\lambda \in \mathbb{R} \setminus \{0\}\) such that
\[
\mathcal{A}_T W = i\lambda W.
\]
Define \(\widetilde{W} = (W, 0, 0)\). Then \(\widetilde{W}\in D(\mathcal{A})\) is an eigenvector of \(\mathcal{A}\), contradicting the exponential stability of \(\mathcal{T}(t)\).

Conversely, suppose \(\mathbf{T}(t)\) is exponentially stable, but \(\mathcal{T}(t)\) is not. This implies that there exists an eigenvector \(0 \neq \widetilde{W} \in D(\mathcal{A})\) and \(\lambda \in \mathbb{R} \setminus \{0\}\) such that
\[
\mathcal{A} \widetilde{W} = i\lambda \widetilde{W}.
\]
Taking the inner product with \(\widetilde{W}\) in \(\mathcal{H}\), we obtain
\[
i\lambda (\widetilde{W}, \widetilde{W})_{\mathcal{H}} - (\mathcal{A} \widetilde{W}, \widetilde{W})_{\mathcal{H}} = 0.
\]
By \eqref{dissipative}, for \(\widetilde{W} = (\widetilde{\mathcal{U}}, \widetilde{V})\), we have
\begin{equation}\label{dissx3}
\gamma |\widetilde{w}|^2 + \gamma_2 |\widetilde{z}|^2 \leq 0,
\end{equation}
which implies \(\widetilde{V} = 0\). Thus, \(W = \widetilde{\mathcal{U}}\) is an eigenvector of \(\mathcal{A}_T\), contradicting the exponential stability of \(\mathbf{T}(t)\). Hence, the result follows.
\end{proof}

\section{Applications}\label{sec:applications} 
\subsection*{The viscoelastic model}
The viscoelastic beam is characterized by $M=\alpha u_{xx}+\alpha_0 u_{xxt}$.
It is well know by now that the semigroup defined by equation \eqref{TTrge.1}--\eqref{bcnh} over the phase space 
$\mathbf{H}_0=\mathcal{V}\times L^1(0,\ell)$ is exponentially stable. 
The infinitesimal operator over $\mathbf{H}_0$ is given by 
$$
\mathcal{A}_T\mathcal{U}= \begin{pmatrix}
					v\\
					-\alpha u_{xxxx} -\alpha_0 v_{xxxx} 
					\end{pmatrix} ,\quad  \mathcal{U} = \begin{pmatrix}
u \\
v
\end{pmatrix}, \quad \forall\; \mathcal{U} \in \mathbf{H}.
					$$
It was showed in \cite{LL2} 	that the corresponding semigroup is exponentially stable, see also  \cite{[K1],[K2]}. For the hybrid problem  
the infinitesimal operator over  $\mathcal{H}= \mathcal{V}\times L^2(0,\ell) \times \mathbb{C}^2
$
is given by 
$$
\mathcal{A}U= \begin{pmatrix}
					v\\
					-\alpha u_{xxxx} -\alpha_0 v_{xxxx} \\
								B^{-1}\Gamma-B^{-1}KV
					\end{pmatrix} ,\quad   U = \begin{pmatrix}
u \\
v\\
V
\end{pmatrix}, \quad \forall\; {U} \in \mathcal{H}.
					$$
Thanks to  Theorem \ref{T00} we have that 
the  hybrid model \eqref{TTrge.1}, \eqref{GHib1}-\eqref{GHib2}, \eqref{bcnhh} is also exponentially stable.

\subsection*{The thermo elastic model of type I}
The thermo elastic beam model is characterized by $M=\alpha u_{xx}+m \theta $.
The corresponding model is given by 
	\begin{eqnarray}\label{Arge.HHH1}
 \rho u_{tt} + \alpha u_{xxxx}+m \theta_{xx}&=&0 \quad \text{in $]0,\ell[\times (0,+\infty)$,}\\
c\theta_t-\kappa \theta_{xx}+mu_{xxt}&=&0 \quad \text{in $]0,\ell[\times (0,+\infty)$,}\label{Arge.HHH2}
\end{eqnarray}
$$
u(\ell,t)=u_x(\ell,t)=0,\quad M(0,t)= M_x(0,t)=0,\quad \theta(0,t)=\theta(\ell,t)=0\quad t\geq 0
$$
with initial conditions 
$$
u(x,0)=u_0(x),\quad u_t(x,0)=u_1(x),\quad \theta(x,0)=\theta_0(x).
$$
the corresponding phase space is given by 
\begin{equation}\label{Phase2}
\mathbf{H}= \mathcal{V}\times L^2(0,\ell) \times L^2(0,\ell) 
\end{equation}
Hence the domain $\mathcal{D}(\mathcal{A}_{T})$ of the linear operator $\mathcal{A}_{T}:D(\mathcal{A}_{T})\subset \mathbf{H} \to \mathbf{H}$ is given by
\[
 \mathcal{D}(\mathcal{A}_{T})=\left\{
 \mathcal{U}\in \mathbf{H}: \;M \in H^2(0,\ell),\;
v \in  \mathcal{V},\; \theta\in H_0^1(0,\ell)\cap H^2(0,\ell)
\text{ verifying \;\;\eqref{bcnh}}
 \right\}.
\]
 $\mathcal{A}_{T}$ be the infinitesimal generator of  system \eqref{TTrge.1}--\eqref{bcnh} , \eqref{eqther}
	\begin{align}\label{Adef111}
				\mathcal{U}=\begin{pmatrix}
					u\\
					v\\
					\theta
					\end{pmatrix}, \quad \quad \mathcal{A}_T\mathcal{U}=\begin{pmatrix}
					v\\
					-\frac{1}{\rho}M_{xx}\\
					\frac{\kappa}{c}\theta_{xx}-\frac{m}{c}v_{xx}
					 \end{pmatrix} 
				\end{align}
				It was proved in \cite{[K3]} the exponential stability of the system, see also \cite{z3A895}. Thanks to  Theorem \ref{T00} we have that  
 the hybrid model \eqref{Arge.HHH1}--\eqref{Arge.HHH1}, \eqref{GHib1}-\eqref{GHib2}, \eqref{bcnhh} 
defined over the phase space is given by

\begin{equation}\label{PhaseH2}
\mathcal{H}= \mathcal{V}\times L^2(0,\ell) \times L^2(0,\ell)\times \mathbb{C}^2
\end{equation}
is also exponentially stable. 

\subsection*{The thermo elastic model of type II}

The thermoelastic model of type II is characterized by $M=\alpha u_{xx}+m \theta $ coupled with the temperature descreved  by equation 
\begin{equation}
c  \theta_{tt} - k^* \theta_{xx} - mu_{xxt} = 0, \quad x \in (0,L), \quad t > 0,
\label{eqther}
\end{equation}
Here the corresponding phase space is given by 
\begin{equation}\label{Phase3}
\mathbf{H}= \mathcal{V}\times L^2(0,\ell) \times H_0^1(0,\ell) \times L^2(0,\ell) 
\end{equation}
Denoting $v=u_t$ and $\Theta=\theta_t$, the corresponding infinitesimal operator is given by 
 $\mathcal{A}_{T}$ be the infinitesimal generator of  system \eqref{TTrge.1}--\eqref{bcnh} , \eqref{eqther}
	\begin{align}\label{Adef111}
				\mathcal{U}=\begin{pmatrix}
					u\\
					v\\
					\theta\\
					\Theta
					\end{pmatrix}, \quad \quad \mathcal{A}_T\mathcal{U}=\begin{pmatrix}
					v\\
					-\frac{1}{\rho}M_{xx}\\
					\Theta\\
					\frac{k^*}{c}\theta_{xx}+\frac{m}{c}v_{xx}
					 \end{pmatrix} 
				\end{align}
The domain $\mathcal{D}(\mathcal{A}_{T})$ of the linear operator $\mathcal{A}_{T}:D(\mathcal{A}_{T})\subset \mathbf{H} \to \mathbf{H}$ is given by
\[
 \mathcal{D}(\mathcal{A}_{T})=\left\{
 \mathcal{U}\in \mathbf{H}: \;M \in H^2(0,\ell),\;
v \in  \mathcal{V},\; \theta\in H^2(0,\ell),\; \Theta\in H_0^1(0,\ell)
\text{ verifying \;\;\eqref{bcnh}}
 \right\}.
\]
It is easy to see that $\mathcal{A}_{T}$ is the infinitesimal generator of a contraction semigroup. Note that  the model \eqref{Arge.HHH1}, \eqref{eqther} is conservative that is $\|e^{t\mathcal{A}_{T}}\|=1$. So the semigroup is not exponentially stable. 

As in the other cases the corresponding it is not difficult to see that operator defined in  \eqref{DefAT} is the infinitesimal generator of a contraction semigroup that defines the solutions of the hybrid model over the phase space

\begin{equation}\label{PhaseH3}
\mathcal{H}= \mathcal{V}\times L^2(0,\ell) \times H_0^1(0,\ell) \times L^2(0,\ell)\times \mathbb{C}^2
\end{equation}
Thanks to  Theorem \ref{T00} we have that  
 the hybrid model \eqref{Arge.HHH1}--\eqref{eqther}, \eqref{GHib1}-\eqref{GHib2}, \eqref{bcnhh} 
is not exponentially stable.

\subsection*{Non simple thermo elasticity }
We consider a beam composed by nonsimple
thermoelastic continua that occupies the interval $]0,\ell[$. The displacement and the temperature of typical particles are given by  $u(t,x)$ and $\theta(t,x)$, $x\in ]0,\ell[$. We denote by $T=\mu u_x+\theta$, $M=\alpha u_{xx}$ the stress and the hyper-stress. In the absent
of body forces the system of equations consists of  
$$
\rho u_{tt}=T_x-M_{xx},\quad \Xi_t= Q_x
$$
Where $\Xi=c\theta+m u_{xt}$ is the entropy density and $Q=\kappa \theta_x$ is the heat flux. 
In \cite {RQuintSare} 	was  proved that the model corresponding to nonsimple thermoelasticity given by equations

	\begin{eqnarray}\label{ATTrge.HHH1}
 \rho_1u_{tt} - \mu u_{xx}+ \alpha u_{xxxx}+m \theta_x&=&0 \quad \text{in $]0,\ell[\times (0,+\infty)$,}\\
c\theta_t-\kappa \theta_{xx}+mu_{xt}&=&0 \quad \text{in $]0,\ell[\times (0,+\infty)$,}\label{ATTrge.HHH2}
\end{eqnarray}
$$
u(\ell,t)=u_x(\ell,t)=u_{xx}(0,t)=u_{xxx}(0,t)=0,\quad \theta(0,t)=\theta(\ell,t)=0 \quad t\geq 0
$$
with initial conditions 
$$
u(x,0)=u_0(x),\quad u_t(x,0)=u_1(x),\quad \theta(x,0)=\theta_0(x).
$$
the corresponding phase space is given by 
\begin{equation}\label{Phase2}
\mathbf{H}= \mathcal{V}\times L^2(0,\ell) \times L^2(0,\ell) 
\end{equation}
Hence the domain $\mathcal{D}(\mathcal{A}_{T})$ of the linear operator $\mathcal{A}_{T}:D(\mathcal{A}_{T})\subset \mathbf{H} \to \mathbf{H}$ is given by
\[
 \mathcal{D}(\mathcal{A}_{T})=\left\{
 \mathcal{U}\in \mathbf{H}: \;M \in H^2(0,\ell),\;
v \in  \mathcal{V},\; \theta\in H_0^1(0,\ell)\cap H^2(0,\ell)
\text{ verifying \;\;\eqref{bcnh}}
 \right\}.
\]
 $\mathcal{A}_{T}$ be the infinitesimal generator of  system \eqref{TTrge.1}--\eqref{bcnh} , \eqref{eqther}
	\begin{align}\label{Adef111}
				\mathcal{U}=\begin{pmatrix}
					u\\
					v\\
					\theta
					\end{pmatrix}, \quad \quad \mathcal{A}_T\mathcal{U}=\begin{pmatrix}
					v\\
					-\frac{1}{\rho}T_x-\frac{1}{\rho}M_{xx}\\
					\frac{\kappa}{c}\theta_{xx}-\frac{m}{c}v_{x}
					 \end{pmatrix} 
				\end{align}
				It was proved in \cite{RQuintSare}  the exponential stability of the system. Thanks to  Theorem \ref{T00} we conclude that  
 the hybrid model \eqref{ATTrge.HHH1}--\eqref{ATTrge.HHH1}, \eqref{GHib1}-\eqref{GHib2}, \eqref{bcnhh} 
defined over the phase space  
$$
\mathcal{H}= \mathcal{V}\times L^2(0,\ell) \times L^2(0,\ell)\times \mathbb{C}^2
$$
is also exponentially stable.

				 \section*{Acknowledgments} The authors would like to thank to CNPq project 307947/2022-0 Brazil and Fondecyt Proyect 1230914 Chile for the financial support.

\section*{Conflict of interest}	
This work does not have any conflicts of interest.

	\end{document}